\def\da{\underline a}
\def\ga{\overline a}
\newtheorem{theorem}{Theorem}
\newtheorem{proposition}[theorem]{Proposition}
\newtheorem{lemma}[theorem]{Lemma}
\theoremstyle{remark}
\newtheorem{remark}[theorem]{\bf Remark}
\newtheorem{example}[theorem]{\bf Example}
\begin{document}

\title{Asymptotic properties of a general model of immune status} 
\author[K. Pich\'or]{Katarzyna Pich\'or}
\address{K. Pich\'or, Institute of Mathematics,
University of Silesia, Bankowa 14, 40-007 Kato\-wi\-ce, Poland.}
\email{katarzyna.pichor@us.edu.pl}
\author[R. Rudnicki]{Ryszard Rudnicki}
\address{R. Rudnicki, Institute of Mathematics,
Polish Academy of Sciences, Bankowa 14, 40-007 Katowice, Poland.}
\email{rudnicki@us.edu.pl}
\keywords{Immune status, physiologically structured population, stochastic semigroup, asymptotic stability, flow with jumps}
\subjclass[2020]{47D06; 35Q92; 60J76; 92D30}
%\date{May 30th, 2021}

\thanks{This research was partially supported by
the  National Science Centre (Poland)
Grant No. 2017/27/B/ST1/00100}

\begin{abstract}
We consider a model of  dynamics of the immune system.
The model is based on three factors: 
occasional boosting and continuous waning of immunity and 
a general description of the period between subsequent boosting events.
The antibody concentration changes according to a non-Markovian process.   
The density of the distribution of this concentration satisfies some
partial differential equation with an integral boundary condition.
We check that this system generates a stochastic semigroup and we study the long-time behaviour of this semigroup.
In particular we prove a theorem on its asymptotic stability.
\end{abstract}

\maketitle

%\footnotetext{This research was partially supported by
%the  National Science Centre (Poland) Grant No. 2017/27/B/ST1/00100.}
%\footnotetext{\textbf{Abbreviations:} ANA, anti-nuclear antibodies; APC, %antigen-presenting cells; IRF, interferon regulatory factor}

\section{Introduction}
\label{intro}
Most of epidemiological models belong to two main different groups \cite{Keeling-book}. 
One group consists of models in which the infected individuals 
become fully immune after recovery, and this protection lasts for life (SIR, SEIR, MSIR models).
The second group includes models in which those cured are immediately susceptible
to infection with the same risk as before (SIS, SEIS, SIRS models).
However there are many diseases in which infection-derived immunity is imperfect and a recovered individual is only
temporary resistant. Models which include temporary immunity (e.g. SIRS, SEIRS) are more advanced 
because the transition between the different epidemiological compartments is quite complicated and depends on the period of immunity, 
which has usually a random length \cite{Blackwood,Dushoff,Le}.
It is worth mentioning
that  naturally acquired  immunity can differ significantly
from vaccine-induced immunity \cite{Wendelboe} (in the case of \textit{Bordetella pertussis}).
 
Since the dynamics of the immune system is an essential component of 
epidemiological models with temporary immunity, it is worth exploring models that describe the immune status \cite{DGKT}.
The \textit{immune status} is the concentration of specific antibodies, which appear after infection with a pathogen
and remain in serum, providing protection against future attacks of the same pathogen. Over time, 
the number of  antibodies decreases until the next infection.  
For example, immunity acquired through infection against \textit{dengue virus} wanes to a level that allows
subsequent infection after an average of two years \cite{Reich},
but for pertussis wanes after 4-20 years \cite{Lavine-Broutin,Versteegh,Wendelboe}
or even after 30 years \cite{Wearing}.
In the case of dengue virus infection, antibody concentrations during primary infection may increase 
to a level sufficient  for inhibition of secondary infection.
 However, immunity wanes  over time,
allowing for secondary infection by serotypes different from the primary strain \cite{Gulbudak}.

During an infection, the immunity is boosted and then 
the immunity is gradually waning, etc. 
Thus, the antibody concentration
is described by a stochastic process whose 
trajectories are decreasing functions $x(t)$ between subsequent infections.
These functions satisfy the differential equation
\begin{equation}
\label{wane}
x'(t)=F(x(t)).
\end{equation}
For example, in \cite{Antia} it was supposed that the antibody level declines exponentially, i.e. $F(x)=-cx$, $c>0$.
We assume that the time it takes the immune system to clear infection is negligible
and that if $x$ is the concentration of antibodies at the moment of infection, then $G(x)>x$ is the concentration of antibodies 
just after clearance of infection. 
An explicit expression for $G$ was derived in \cite{GKTD,TEGB-MK}.
 
In \cite{DGKT} the authors consider functions $G$ which are unimodal and have properties: $\lim_{x\to0}G(x)=\infty$
and $\lim_{x\to\infty}(G(x)-x)=\text{const}$. They proved the existence of an asymptotically stable  stationary density of the process $x(t)$.
The result from the paper \cite{DGKT} was extended  
in \cite{PR-immunology} to a large class of function $G$ 
including a significant case when the increase of the concentration of antibodies after the infection is bounded.

In \cite{DGKT,PR-immunology} it was assumed that the moments of infections are independent of the state of the immune system and they
are distributed according to a Poisson process $(N_t)_{t\ge 0}$. It means that the period between 
infections is exponentially distributed. In this paper we consider a more general model where the length of this period depends on the immune status $x_b$
just after clearance of infection
and has the probability density distribution $a\mapsto q(x_b,a)$, i.e. 
the integral $\int_0^a q(x_b,s)\, ds$ is the probability that this period has the length $\tau\le a$.
The form of the function $q$ depends on many factors, including the infectious disease under consideration, the degree of population immunity, and the preventive measures taken \cite{Metcalf}. Certain infections occur seasonally, most of them in annual cycles but, for example,
there is statistical evidence for three or four year cycles of pertussis \cite{Broutin,Lavine-short}.
In the case of seasonal diseases  the function $q$ has some local maxima, thus the exponential function does not describe 
the distribution of the period between infections and
it is better to assume that $q$ has a gamma distribution with respect to $a$ \cite{Wearing}. 
The function $q$ can also depend on $x_b$ because the re-infection occurs when antibody level falls below a certain threshold value~\cite{Versteegh}.
   
The immune status is a flow on the interval $[0,\infty)$ with jumps at random moments $t_0<t_1<t_2<t_3<\dots$.
We denote this process by $(\xi_t)_{t\ge 0}$ and it is defined by the following equations
\[
\xi_{t_n}=G(\xi_{t_{n}^-}),\quad \xi_t'=F(\xi_t)\,\,\,\textrm{for $t\in [t_{n},t_{n+1})$},\,\ t_{n+1}-t_{n}=\tau_n,\  n=0,1,2,\dots
\]
and the random variables 
$\tau_0=t_1-t_0$, $\tau_1=t_2-t_1$, $\tau_2=t_3-t_2,\dots$,
are independent and $\tau_n$ has the density distribution $q(\xi_{t_n},a)$.
Then $(\xi_t)_{t\ge 0}$ is a piecewise deterministic process \cite{davis84,RT-K-k}.
Let $f(t)$ be  the density of the random variable $\xi_t$.
Our main goal is to prove that under some general assumptions there exists a unique density $\tilde g_*\in L^1[0,\infty)$ such that 
\[
\lim_{t\to\infty} \|f(t)-\tilde g_*\|=0
\]
(see Section~\ref{s:as-one-dim}).

The main difficulty in proving this result is that the process $(\xi_t)_{t\ge 0}$ is
only Markovian when the random variables $\tau_n$ have an exponential distribution.
In our case, the function $f$ is not a solution of some evolution equation 
$f'(t)=\mathcal Af(t)$ on  the space $L^1[0,\infty)$. Thus the process $(\xi_t)_{t\ge 0}$ does not generate a semigroup of operators  
and we cannot use methods directly from the papers \cite{DGKT,PR-immunology} to study the behaviour of $f(t)$.

To overcome these obstacles we introduce a  Markov process $(\eta_t)_{t\ge 0}$ 
given by the formula
\[
\eta_t=(\xi_{t_n},t-t_n) \quad\text{for $t_n\le t< t_{n+1}$.}
\]
Thus, the process $(\eta_t)_{t\ge 0}$ describes the immune status
at the  moment of the last jump and the time that has elapsed since that jump.
This process takes values in the space $[0,\infty)^2$
and has Markov property with respect to its natural filtration 
$\mathcal F_{\le t}=\sigma(\eta_s\colon s\le t)$, $t\ge 0$.
Under suitable assumptions, the random variables $\eta_t$ have densities if 
the random variable $\eta_0$ has a density.
Accordingly, the process $(\eta_t)_{t\ge 0}$ generates a semigroup of operators, convenient for studying the behavior of its densities and, consequently, also the asymptotics of $f(t)$.
   
Let $a=t-t_n$, $x_b=\xi_{t_n}$, and we denote by $u(t,x_b,a)$
the density of the distribution of $\eta_t$.  If $v(x_b,a)=u(0,x_b,a)$ then we define $P(t)v(x_b,a)=u(t,x_b,a)$.  
We will check whether $\{P(t)\}_{t\ge 0}$ is a stochastic semigroup on some $L^1$ space
and we will describe the long-time behaviour of this semigroup. 
We are mainly interested in the asymptotic stability of this semigroup. 
Finally, we return to the distribution of the process $(\xi_t)_{t\ge 0}$
and we deduce from the properties of the semigroup $\{P(t)\}_{t\ge 0}$, the asymptotic behaviour of the densities $f(t)$. 
In particular from asymptotic stability of the semigroup  $\{P(t)\}_{t\ge 0}$ it follows that 
the process $(\xi_t)_{t\ge 0}$ has a unique stationary density $\tilde g_*$ and, according to the ergodic theorem, 
$\tilde g_*$ is the density of the distribution of the immune status in the population.  
The main idea of the paper is to formulate the problem in the terms of stochastic semigroups and then apply some results concerning
the Foguel alternative~\cite{PR-JMMA2016,PR-SD2017}, which gives conditions when a stochastic semigroup is asymptotically stable or sweeping.

The organization of the paper is as follows.
In Section~\ref{s:model} we present the assumptions concerning our model and 
we introduce a system of equations for $u(t,x_b,a)$.
The system consists of a partial differential equation of the first order, an integral type boundary condition 
and an initial condition.
In Sections~\ref{s:semigroup} and~\ref{s:restr-semigroup} we introduce a stochastic semigroup related to this system.
Section~\ref{s:gen-asympt} contains general definitions and results concerning long-time behaviour of stochastic semigroups.
The main result of the paper is asymptotic stability of the semigroup $\{P(t)\}_{t\ge 0}$ and its
proof  is given in the next three sections.
In Section~\ref{s:sweeping} we study the sweeping property which can be interpreted as permanent immunity of the population.
In Section~\ref{s:as-one-dim} we apply results concerning asymptotic stability of the semigroup  $\{P(t)\}_{t\ge 0}$ to the process $(\xi_t)$.  
We show  that the semigroup generated by the process $\zeta_t=(\xi_t,t-t_n)$, $t_n\le t< t_{n+1}$,
is also asymptotically stable, which implies convergence of the marginal distributions of $(\xi_t)_{t\ge 0}$ to some stationary density.   
In the last section we analyze  versions of the model with specific  functions $F$, $G$ and $q$, e.g. 
when immunity decreases exponentially; with constant increase of antibodies after infection; with a threshold concentration of antibodies at the re-infection;
and with seasonal infections.

\section{Description of the model}
\label{s:model}
We recall that the immunity $x$ increases during infection to the value $G(x)$ and then decreases   
according to the equation $x'(t)=F(x(t))$. We also recall that if $x_b$ is the immunity after an infection, then 
the function $a\mapsto q(x_b,a)$ is the distribution density of the time to the next infection. 
We assume that the functions $F$, $G$, and $q$ satisfy the following conditions: 
\begin{itemize}
\item[(A1)] $F\colon [0,\infty)\to \mathbb R$ is a $C^1$-function such that $F(x)<0$ for $x>0$ and $F(0)=0$,
\item[(A2)] $G\colon [0,\infty)\to (0,\infty)$ is a $C^1$-function such that $G(x)>x$ for $x\ge 0$, 
\item[(A3)] there exists an at most countable family of pairwise disjoint  open intervals $\Delta_i$, $i\in I$, such that 
the set  $[0,\infty)\setminus\bigcup_{i\in I}\Delta_i$  
has Lebesgue measure zero
and $G'(x)\ne 0$ for $x\in \Delta_i$ and $i\in I$,
\item[(A4)] $q\colon [0,\infty)\times [0,\infty)\to [0,\infty)$ is a  continuous function and 
for each $x_b\in [0,\infty)$ the function $a\mapsto q(x_b,a)$ is a probability density,
\item[(A5)] there exists $\varepsilon>0$ such that
$\int_0^{\varepsilon} q(x_b,a)\,da<1-\varepsilon $ for all $x_b\ge 0$.
\end{itemize}
%We recall that $\mathbf I=[x_{\rm min},\infty)$,  where $x_{\rm min}=\min\{G(x)\colon x\ge 0\}$.

We denote by $\pi_tx_0$ the solution $x(t)$ of Eq.~\eqref{wane} with the initial condition $x(0)=x_0\ge 0$.
We define $\ga(x_b)\le \infty$ to be the minimum number such that
\[
\int_0^{\ga(x_b)}q(x_b,a)\,da=1\quad\textrm{for $x_b\ge 0$}.
\]
We assume that  $\ga(x_b)=\infty$ if  $\int_0^r q(x_b,a)\,da<1$  for all $r>0$. Let
\[
Y=\{(x_b,a)\colon    x_b\ge 0,\,\,\, 0\le a<\ga(x_b)\}.
\]
The process $(\eta_t)_{t\ge 0}$ has values in  $Y$.

Assumption (A3) allows us to introduce   \cite{Rudnicki-LN,RT-K-k} a linear operator $P_G$ on the space $L^1=L^1[0,\infty)$ given by the formula
\begin{equation}
\label{F-P-operator-gladki}
P_Gf(x)=\sum_{i\in I_x} f(\varphi_i(x))|\varphi_i'(x)|,
\end{equation} 
where $\varphi_i$ is the inverse function of 
$G\big|_{\Delta_i}$
and $I_x=\{i\colon x\in G(\Delta_i)\}$.
Then $P_G$ is a {\it Frobenius--Perron operator} \cite{LiM} for the transformation~$G$, i.e. 
$P_G$ satisfies the following condition 
\begin{equation}
\label{def-FP}
\int_A P_G f(x)\,dx=
\int_{G^{-1}(A)} f(x)\,dx
\end{equation}
for each $f\in L^1$ and all Borel subsets $A$ of $[0,\infty)$.

The operator $P_G^*\colon L^{\infty}[0,\infty)\to L^{\infty}[0,\infty)$
adjoint of the Frobenius--Perron operator $P_G$
is given by $P_G^*f(x)= f(G(x))$
and it is called the \textit{Koopman operator} or the 
\textit{composition operator}.

Denote by $D$ the subset of the space
$L^1$ which contains all
\textit{densities}
\[
D=\{f\in L^1\colon \,\, f\ge 0,\,\, \|f\|=1\}.
\]
The Frobenius--Perron operator describes the evolution of densities under the action of the transformation $G$ and it is an example of a
\textit{stochastic} or \textit{Markov  operator},
which is defined as a linear operator $P\colon  L^1\to L^1$ 
such that $P(D)\subset D$. We  also use the notion of a \textit{stochastic semigroup}, which is 
a $C_0$-semigroup of stochastic operators.  
We recall that a family $\{U(t)\}_{t\ge0}$ of linear
operators on a Banach space $E$
is a $C_0$-\textit{semigroup}
or \textit{strongly continuous semigroup}
if it satisfies the following conditions:
%\begin{itemize}[\rm(a)]
\begin{enumerate}[\rm(a)]
\item \ $U(0)=I$,  i.e., $U(0)f =f$ for $f\in E$,
\item \ $U(t+s)=U(t) U(s)\quad \textrm{for}\quad
s,\,t\ge0$,
\item \  for each $f\in E$ the function
$t\mapsto U(t)f$ is continuous.
\end{enumerate}
%\end{itemize}

We now define a stochastic semigroup related to the process $(\eta_t)_{t\ge 0}$. 
We recall that $\eta_t=(\xi_{t_n},t-t_n)$ for $t\in [t_n,t_{n+1})$, where 
$t_0<t_1<t_2<\dots$ are jump times for the process $(\xi_t)_{t\ge 0}$.
We have $\xi_{t_{n+1}}=G_{\tau_n}(\xi_{t_n})$, where $\tau_n=t_{n+1}-t_n$ 
and $G_a(x_b)=G(\pi_ax_b)$
for $x_b\ge 0$ and $a\ge 0$.
For each $a\ge 0$ the transformation $G_a$ is a $C^1$-function and
satisfies (A3).  We denote by $P_a$ the Frobenius--Perron operator corresponding to $G_a$. Then  
$P_a=P_GP_{\pi_a}$ and according to
\eqref{F-P-operator-gladki} we have  
\begin{equation}
\label{F-P-pi}
P_{\pi_a}f(x)=f(\pi_{-a}x)\frac{F(\pi_{-a}x)}
{F(x)}
\end{equation}
if $\pi_{-a}x$ exists and $P_{\pi_a}f(x)=0$ otherwise.
The adjoint of the operator $P_a$ is given by
$P_a^*f(x)=f(G(\pi_ax))$.

If the process $(\xi_t)_{t\ge 0}$ starts from the point $x_b$,
then the jump rate at time $a$ is given by
\[
p(x_b,a)=\lim_{\Delta t\downarrow 0}
\frac{\operatorname P(\tau\in [a,a+\Delta t] \mid \tau\ge a)}{\Delta t},
\]
where the random variable $\tau$ is the length of the period between jumps. 
Let $\Phi(x_b,a)=\int_a^{\infty} q(x_b,r)\,dr$. An easy computation shows that
$\Phi(x_b,a)=\exp\big(-\int_0^a p(x_b,r)\,dr\big)$, which gives 
\begin{align}
\label{function p1}
q(x_b,a)&=p(x_b,a)\exp\big(-\textstyle{\int_0^a} p(x_b,r)\,dr\big),\\
\label{function p2}
\quad p(x_b,a)&=\frac{q(x_b,a)}{\int_a^{\infty} q(x_b,r)\,dr} 
\end{align}
 for $a<\ga(x_b)$ and we set $ p(x_b,a)=0$ for $a\ge \ga(x_b)$. 
As   $\Phi(x_b,\ga(x_b))=0$, we have 
\begin{equation}
\label{function p3}
\int_r^{\ga(x_b)}p(x_b,a)\,da=\infty \quad\textrm{for $r<\ga(x_b)$}. 
\end{equation}

Let $u(t,x_b,a)$ be  the density of  distribution of $\eta_t$.
Since $p(x_b,a)$ is the rate of jump 
of the process $(\eta_t)_{t\ge 0}$ 
from $(x_b,a)$ to $(G_a(x_b),0)$ and since $P_a$ is the Frobenius--Perron operator corresponding to $G_a$, 
we have 
\begin{equation}
\label{boubd-cond}
u(t,x_b,0)=\int_0^{\infty}
\Big(P_a\big(p(\cdot,a)u(t,\cdot,a)\big)\Big)(x_b)\,da . 
\end{equation}
Though we consider $a\le\ga(x_b)$, it will be convenient to keep in the paper the notation of integral  $\int_0^{\infty}$ with respect to $a$ as in 
formula (\ref{boubd-cond}) assuming that $u(t,x_b,a)=0$ for $a> \ga(x_b)$.
We will use the shortened notation $\mathcal Pu(t,x_b)$
for the expression on the right-hand side of (\ref{boubd-cond}).
Thus, equation (\ref{boubd-cond}) takes the form $u(t,x_b,0)=\mathcal Pu(t,x_b)$. 
We will also write $\mathcal Pf(x_b)$ instead of $\int_0^{\infty}
\Big(P_a\big(p(\cdot,a)f(\cdot,a)\big)\Big)(x_b)\,da$.

Hence the function $u$ satisfies the following initial-boundary problem:
\begin{align}
\label{eq1}
&\frac{\partial u}{\partial t}(t,x_b,a)
 +\frac{\partial u}{\partial a}(t,x_b,a)
 =-p(x_b,a)u(t,x_b,a),\\
&u(t,x_b,0)=\mathcal Pu(t,x_b), 
\label{eq2}\\
&u(0,x_b,a)=u_0(x_b,a). 
\label{eq3}
\end{align}

\section{Stochastic semigroup}
\label{s:semigroup}

We show that system (\ref{eq1})--(\ref{eq3}) generates a stochastic semigroup on the space $E=L^1(Y,\mathcal B(Y),m)$, where $\mathcal B(Y)$ is the $\sigma$-algebra of Borel subsets of $Y$ and $m$ is the Lebesgue measure.
Let $\mathcal A$ be an operator with domain
\begin{equation*}
%\label{domain-A}
\mathcal D(\mathcal A)=\Big\{f\in E\colon\,\, \frac{\partial f}{\partial a}\in E,
\,\,pf\in E,\,\, f(x_b,0)=\mathcal Pf(x_b) \Big\}
 \end{equation*}
 given by  
\[
\mathcal Af=-\frac{\partial f}{\partial a}-pf.
\]

Since a function $f\in E$ is only almost everywhere defined, 
the formula for $f(x_b,0)$ needs clarification.
The domain $\mathcal D(\mathcal A)$ is a subset of 
the Sobolev space 
\[
W_1(Y)=\Big\{f\in E\colon \frac{\partial f}{\partial a}\in E\Big\}
\]   
with the norm $\|f\|_{W_1(Y)}=\|f\|_E+\Big\|\dfrac{\partial f}{\partial a}\Big\|_E$. 
In the space $W_1(Y)$ we introduce the trace operator $\mathcal T\colon W_1(Y)\to L^1[0,\infty)$,  $\mathcal Tf(x_b)=f(x_b,0)$  
in the following way (see also \cite{Evans} Chapter 5.5). Let $C^1_c(Y)$ be the space of $C^1$-functions from $Y$ to $\mathbb R$  with compact supports.
Then for $f\in C^1_c(Y)$ we have 
\[
\int_0^{\infty}|f(x_b,0)|\,dx_b\le \iint\limits_Y\Big|\frac{\partial f}{\partial a}(x_b,a)\Big|\,dx_b\,da\le \|f\|_{W_1(Y)}.
\]
Since the set $C^1_c(Y)$ is dense in $W_1(Y)$ we can extend $\mathcal T$ uniquely  to a linear bounded operator
on the whole space $W_1(Y)$.

\begin{theorem}
\label{th:gen}
The operator $\mathcal A$ generates a stochastic semigroup $\{P(t)\}_{t\ge0}$
on~$E$.
\end{theorem}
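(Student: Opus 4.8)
The plan is to realize $\mathcal A$ as a boundary perturbation of the no-jump transport operator, to construct the resolvent of $\mathcal A$ explicitly, and then to invoke the Hille--Yosida theorem for positive contraction semigroups on $L^1$; the stochasticity will finally be read off from a conservativity identity. First I would introduce the operator $\mathcal A_0$ given by the same formula $\mathcal A_0 f=-\partial f/\partial a-pf$ but on the domain with the \emph{zero} boundary condition $f(x_b,0)=0$. Solving $\partial_t u+\partial_a u=-pu$ by the method of characteristics produces the explicit family
\[
S_0(t)f(x_b,a)=f(x_b,a-t)\,\exp\Big(-\!\int_{a-t}^a p(x_b,r)\,dr\Big)\,\mathbf 1_{\{a\ge t\}},
\]
which one checks is a $C_0$-semigroup of positive contractions (a substochastic semigroup), since $p\ge 0$ only removes mass and nothing is reinjected at $a=0$. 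Laplace-transforming gives, for $\lambda>0$,
\[
R(\lambda,\mathcal A_0)g(x_b,a)=\int_0^a e^{-\lambda(a-s)}\exp\Big(-\!\int_s^a p(x_b,r)\,dr\Big)g(x_b,s)\,ds .
\]

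Next I would build the resolvent of the full operator. For $g\in E$ and $\lambda>0$ the equation $(\lambda-\mathcal A)f=g$ has the general solution
\[
f(x_b,a)=h(x_b)\,e^{-\lambda a}\exp\Big(-\!\int_0^a\! p(x_b,r)\,dr\Big)+R(\lambda,\mathcal A_0)g(x_b,a),\qquad h(x_b):=f(x_b,0),
\]
and imposing the boundary condition $h=\mathcal Pf$ reduces the whole problem to the scalar fixed-point equation $h=K_\lambda h+r_\lambda$ on $L^1[0,\infty)$, where $K_\lambda h=\mathcal P\big(h(\cdot)e^{-\lambda a}e^{-\int_0^a p\,dr}\big)$ and $r_\lambda=\mathcal P\big(R(\lambda,\mathcal A_0)g\big)$. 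The crucial estimate is that $K_\lambda$ is substochastic with norm bounded away from $1$: using that each $P_a$ is a Frobenius--Perron (hence stochastic) operator and the identity $q(x_b,a)=p(x_b,a)\exp(-\int_0^a p\,dr)$ from \eqref{function p1}, for $h\ge 0$ one obtains
\[
\|K_\lambda h\|=\int_0^\infty h(x_b)\Big(\int_0^\infty e^{-\lambda a}q(x_b,a)\,da\Big)dx_b\le\|h\|,
\]
and assumption (A5) upgrades this to $\|K_\lambda\|\le 1-\varepsilon(1-e^{-\lambda\varepsilon})<1$. Hence $I-K_\lambda$ is invertible with positive inverse (Neumann series), $h=(I-K_\lambda)^{-1}r_\lambda$ is well defined and nonnegative whenever $g\ge 0$, and this yields a positive bounded operator $R_\lambda\colon g\mapsto f$ which one verifies to be $(\lambda-\mathcal A)^{-1}$; uniqueness comes from the same contraction bound, and closedness of $\mathcal A$ follows. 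Positivity of $R_\lambda$ together with the contraction estimate $\|\lambda R_\lambda\|\le 1$ (which I obtain below from the mass-balance computation) then gives, via Hille--Yosida, a positive $C_0$-semigroup of contractions generated by $\mathcal A$; density of $\mathcal D(\mathcal A)$ in $E$ is routine.

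Finally I would prove that the semigroup is genuinely \emph{stochastic} rather than merely substochastic. For $f\in\mathcal D(\mathcal A)$, integrating $-\partial f/\partial a$ in $a$ over $[0,\ga(x_b))$ and using both the boundary condition $f(x_b,0)=\mathcal Pf(x_b)$ and the mass balance $\int\mathcal Pf\,dx_b=\int_0^\infty\!\!\int_0^\infty p(x_b,a)f(x_b,a)\,da\,dx_b$ (again a consequence of each $P_a$ being stochastic), the killing term cancels the boundary inflow and one is left with
\[
\int_Y\mathcal Af\,dm=-\int_0^\infty f(x_b,\ga(x_b))\,dx_b .
\]
For $f\ge 0$ this is $\le 0$, giving the dissipativity and hence the contraction estimate used above; the equality $\int_Y\mathcal Af\,dm=0$ --- i.e. stochasticity --- holds because \eqref{function p3} forces $\exp(-\int_0^a p\,dr)\to 0$ as $a\uparrow\ga(x_b)$, so every $f\in\mathcal D(\mathcal A)$ has vanishing trace at $a=\ga(x_b)$ and no mass escapes through the outer boundary. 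I expect the main obstacle to be precisely this conservativity (honesty) step: rigorously justifying the integration by parts in $a$ for a general $f\in\mathcal D(\mathcal A)$ and proving the vanishing trace at $a=\ga(x_b)$, since $p$ is unbounded near $\ga(x_b)$ and one must rule out any loss of mass at the outer boundary. By comparison, the generation and positivity arguments are standard once the norm bound $\|K_\lambda\|<1$ is in hand.
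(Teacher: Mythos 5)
Your proposal is correct in substance, but it assembles the proof along the alternative route that the paper explicitly mentions and then avoids: a direct construction of the resolvent followed by the Hille--Yosida theorem, rather than an appeal to the abstract boundary-perturbation theorem of Gwi\.zd\.z and Tyran-Kami\'nska (\cite{GMTK}, stated in the paper as Theorem~\ref{MTK}, building on \cite{greiner}). The ingredients are in fact identical: your zero-boundary semigroup $S_0(t)$ is the paper's $P_0(t)$ (their condition (3)); your homogeneous solutions $h(x_b)e^{-\lambda a}\exp(-\int_0^a p\,dr)$ are the paper's right inverse $\Psi(\lambda)$ (their condition (1)); your operator $K_\lambda$ is the paper's $\mathcal P\Psi(\lambda)$, and your bound $\|K_\lambda\|\le 1-\varepsilon(1-e^{-\lambda\varepsilon})<1$ via (A5) and \eqref{function p1} is exactly their verification of condition (2); your mass-balance computations are their conditions (4)--(5). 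The difference is what happens after these ingredients are in place: the paper lets Theorem~\ref{MTK} do the remaining work (invertibility of $I-\mathcal P\Psi(\lambda)$, generation, and in particular \emph{honesty} of the resulting semigroup), whereas you must carry out the Neumann series, verify the resolvent identities, closedness and density, and prove conservativity by hand. What your route buys is self-containedness; what the paper's route buys is precisely the avoidance of the step you correctly identify as the main obstacle.

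One remark on that obstacle: your justification of the vanishing outer trace is slightly off target as stated. The decay $\exp(-\int_0^a p\,dr)\to 0$ explains why functions of the form $h\Phi_\lambda$ vanish at $a=\ga(x_b)$, but it does not by itself apply to a general $f\in\mathcal D(\mathcal A)$. The correct argument is either (i) to use that, once $\lambda-\mathcal A$ is shown to be injective (your contraction bound gives this), every domain element has the representation $h\Phi_\lambda+R(\lambda,\mathcal A_0)g$, and both terms tend to $0$ as $a\uparrow\ga(x_b)$ by dominated convergence; or (ii) to argue directly from the domain conditions: for a.e.\ $x_b$ the function $a\mapsto f(x_b,a)$ is absolutely continuous with integrable derivative, hence has a limit $\ell(x_b)$ at $\ga(x_b)$, and if $\ell(x_b)\neq 0$ then $\int_r^{\ga(x_b)}p(x_b,a)|f(x_b,a)|\,da=\infty$ by \eqref{function p3}, contradicting $pf\in E$. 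Either repair is routine, so this is a gap in precision rather than in the method; with it filled, your proof goes through.
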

The proof of this result can be done by using the Hille--Yosida theorem, but it is easer to apply a perturbation method related to
operators with boundary conditions developed in \cite{greiner}
and an extension of this method to unbounded perturbations in $L^1$ space
in \cite{GMTK}. Theorem~\ref{MTK}  below is a version of \cite[Theorem 1]{GMTK}
for stochastic semigroups (see \cite[Remark 2]{GMTK}).

\begin{theorem}
\label{MTK}
Let $(\Gamma,\Sigma,m)$, $(\Gamma_{\partial},\Sigma_{\partial},m_{\partial})$ be $\sigma$-finite measure spaces
and let $L^1=L^1(\Gamma,\Sigma,m)$ and $L_{\partial}^1=L^1(\Gamma_{\partial},\Sigma_{\partial},m_{\partial})$.
Let $\mathcal{D}$ be a linear subspace of~$L^1$.
We assume that
$A\colon \mathcal D\to L^1$ and $\Psi_0,\Psi\colon \mathcal D\to L_{\partial}^1$ are linear operators satisfying the following conditions: 
\begin{enumerate}[\rm(1)]
\item
for each $\lambda>0$, the operator $\Psi_0\colon \mathcal{D}\to L^1_{\partial}$ restricted to the nullspace 
$\mathcal N(\lambda I-A)=\{f\in\mathcal D\colon \lambda f- Af=0\}$ has a  positive right inverse $\Psi(\lambda)\colon L^1_{\partial}\to \mathcal N(\lambda I-A)$, i.e.  
$\Psi_0\Psi(\lambda)f_{\partial}=f_{\partial}$ for $f_{\partial} \in L^1_{\partial}$; 
\item
 the operator $\Psi\colon \mathcal{D} \to L^1_{\partial}$ is positive and there is $\omega>0$ such that $\|\Psi \Psi(\lambda)\|<1$ for $\lambda>\omega$;
\item the operator  $A_0=A\big|_{\mathcal D(A_0)} $, where $\mathcal{D}(A_0)=\{f\in \mathcal{D}\colon  \Psi_0 f=0\}$,
 generates a positive $C_0$-semigroup on $L^1$;
\item
$\int_{\Gamma} Af(x)\,m(dx)\le \int_{\Gamma_{\partial}} \Psi_0 f(x_\partial)\,m_{\partial}(dx_\partial)$
for $f\in \mathcal{D}_+=\{f\in \mathcal{D}\colon f\ge 0\}$;
\item
$\int_{\Gamma} Af(x)\,m(dx)= 0$ for  
$f\in \mathcal{D}(A)=\{f\in \mathcal{D}\colon  \Psi_0 f=\Psi f\}$ and 
$f\ge 0$.
\end{enumerate}
Then the operator $A$ with the domain $\mathcal{D}(A)$ generates a stochastic semigroup.
\end{theorem}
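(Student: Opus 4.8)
The plan is to realize $A$, with domain $\mathcal D(A)=\{f\in\mathcal D\colon \Psi_0 f=\Psi f\}$, as a boundary perturbation of the generator $A_0$ and to produce its resolvent explicitly by the Greiner method~\cite{greiner}, upgraded to the unbounded $L^1$ setting of~\cite{GMTK}. Fix $\lambda>\omega$. By (3) the operator $A_0$ generates a positive $C_0$-semigroup, so for large $\lambda$ we have $\lambda\in\rho(A_0)$ and $R(\lambda,A_0)=(\lambda I-A_0)^{-1}\ge 0$. To solve $(\lambda I-A)f=g$ subject to $\Psi_0 f=\Psi f$, I would split $f=R(\lambda,A_0)g+w$ with $w\in\mathcal N(\lambda I-A)$. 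Since $R(\lambda,A_0)g\in\mathcal D(A_0)=\{\Psi_0=0\}$, writing $w=\Psi(\lambda)\Psi_0 f$ with the right inverse $\Psi(\lambda)$ from (1) and imposing the boundary condition reduces the problem to the equation $(I-\Psi\Psi(\lambda))\,\Psi_0 f=\Psi R(\lambda,A_0)g$ in $L^1_\partial$. Condition (2) makes $I-\Psi\Psi(\lambda)$ invertible by a Neumann series and yields the candidate resolvent
\[
R_\lambda=R(\lambda,A_0)+\Psi(\lambda)\,(I-\Psi\Psi(\lambda))^{-1}\Psi R(\lambda,A_0).
\]

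Next I would establish positivity and the norm estimate. Positivity $R_\lambda\ge 0$ is immediate because every factor is positive: $R(\lambda,A_0)\ge 0$ by (3), $\Psi(\lambda)\ge 0$ and $\Psi\ge 0$ by (1)--(2), and $(I-\Psi\Psi(\lambda))^{-1}=\sum_{n\ge 0}(\Psi\Psi(\lambda))^n\ge 0$. For the norm I would use mass balance. Applied to $h\in\mathcal D_+$ with $\Psi_0 h=0$, condition (4) gives $\int_\Gamma A_0 h\,dm\le 0$, hence $\|\lambda R(\lambda,A_0)\|\le 1$; applied to $w=\Psi(\lambda)f_\partial\in\mathcal N(\lambda I-A)$ with $f_\partial\ge 0$ it gives $\lambda\int_\Gamma w\,dm\le\int_{\Gamma_\partial}\Psi_0 w\,dm_\partial=\int_{\Gamma_\partial}f_\partial\,dm_\partial$, i.e.\ $\|\lambda\Psi(\lambda)\|\le 1$. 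These bounds, together with (2), are what make $\Psi R(\lambda,A_0)$ and the Neumann series well defined on $L^1$ even though $\Psi$ is unbounded. Finally, for $g\ge 0$ set $f=R_\lambda g\ge 0\in\mathcal D(A)$; since then $\Psi_0 f=\Psi f$, condition (5) gives $\int_\Gamma Af\,dm=0$, and integrating $(\lambda I-A)f=g$ yields $\lambda\|f\|=\|g\|$. Thus $\|\lambda R_\lambda g\|=\|g\|$ for $g\ge 0$, and by positivity $\|\lambda R_\lambda\|\le 1$.

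It then remains to identify $R_\lambda$ with the resolvent of $A$ and to invoke the generation theorem. From the construction $(\lambda I-A)R_\lambda=I$, and a short computation shows $\Psi R_\lambda g=\Psi_0 R_\lambda g=f_\partial$, so the range of $R_\lambda$ is exactly $\mathcal D(A)$, on which $R_\lambda(\lambda I-A)=I$. Because $\mathcal D(A)$ is independent of $\lambda$, the family $\{R_\lambda\}_{\lambda>\omega}$ satisfies the resolvent identity, whence $(\omega,\infty)\subset\rho(A)$ with $R(\lambda,A)=R_\lambda$; the estimate $\|\lambda R_\lambda\|\le 1$ forces $\mathcal D(A)$ to be dense and $A$ to be closed. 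By the Hille--Yosida/Phillips theorem for positive contractions on $L^1$, $A$ generates a positive contraction semigroup $\{P(t)\}_{t\ge 0}$. Positivity of the resolvent transfers to positivity of $P(t)$, and the conservation relation from (5) gives $\frac{d}{dt}\int_\Gamma P(t)f\,dm=\int_\Gamma AP(t)f\,dm=0$, so $\int_\Gamma P(t)f\,dm=\int_\Gamma f\,dm$ for $f\ge 0$; combined with positivity this yields $\|P(t)f\|=\|f\|$ on densities, i.e.\ $\{P(t)\}_{t\ge 0}$ is stochastic.

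The main obstacle is that in the $L^1$ setting the trace $\Psi_0$ and the feedback $\Psi$ are unbounded, so the classical Greiner theory for bounded boundary operators does not apply directly; this is precisely the extension to unbounded perturbations of~\cite{GMTK}. Concretely, the delicate points are to make sense of $\Psi f$ and $\Psi R(\lambda,A_0)g$ as elements of $L^1_\partial$ and to justify the mass balances leading to $\|\lambda\Psi(\lambda)\|\le 1$ and to the surjectivity of $\lambda I-A$. Conditions (1)--(4) are tailored exactly to control these unbounded objects and to guarantee that the constructed $R_\lambda$ genuinely inverts $\lambda I-A$ and defines a substochastic semigroup, while (5) is what upgrades that semigroup to a conservative, i.e.\ stochastic, one.
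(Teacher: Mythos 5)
Your overall architecture is sound, and it is worth noting that the paper itself does not prove Theorem~\ref{MTK} at all: it quotes it as a version of \cite[Theorem 1]{GMTK} (with \cite[Remark 2]{GMTK} for the stochastic case), and the Greiner-type resolvent construction you sketch is exactly the method that the cited source implements. Your reduction of the boundary problem to $(I-\Psi\Psi(\lambda))\Psi_0f=\Psi R(\lambda,A_0)g$, the Neumann-series inverse, the positivity of $R_\lambda$, the mass-balance bounds $\|\lambda R(\lambda,A_0)\|\le 1$ and $\|\lambda\Psi(\lambda)\|\le 1$ extracted from condition (4), and the conservativity argument via condition (5) are all correct. (One small inaccuracy: $\Psi R(\lambda,A_0)g$ is well defined not because of these bounds but simply because $R(\lambda,A_0)g\in\mathcal D(A_0)\subset\mathcal D$ and $\Psi$ is defined on all of $\mathcal D$; what the positivity-plus-mass-balance argument actually buys is boundedness of the composite $R_\lambda$, not of its factors.)

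There is, however, one genuine gap: the assertion that ``the estimate $\|\lambda R_\lambda\|\le1$ forces $\mathcal D(A)$ to be dense.'' Closedness of $A$ is indeed automatic once $\lambda I-A$ has a bounded everywhere-defined inverse, but density of the domain is \emph{not} a consequence of any resolvent estimate — the entire theory of non-densely defined Hille--Yosida operators and integrated semigroups exists precisely because contraction-type resolvent bounds do not imply density, and Hille--Yosida cannot be invoked without it. Density must be proved, and the standard repair is Greiner's corrector: for $f\in\mathcal D(A_0)$ set $f_\lambda=f+\Psi(\lambda)(I-\Psi\Psi(\lambda))^{-1}\Psi f$; a direct computation gives $\Psi_0f_\lambda=(I-\Psi\Psi(\lambda))^{-1}\Psi f=\Psi f_\lambda$, so $f_\lambda\in\mathcal D(A)$, and $\|f_\lambda-f\|\le\|\Psi(\lambda)\|\,\|(I-\Psi\Psi(\lambda))^{-1}\|\,\|\Psi f\|$ with $\|\Psi(\lambda)\|\le1/\lambda$. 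But here a second glossed point becomes essential: condition (2) gives $\|\Psi\Psi(\lambda)\|<1$ only for each fixed $\lambda$, with no bound away from $1$ as $\lambda\to\infty$, so the factor $\|(I-\Psi\Psi(\lambda))^{-1}\|$ is not obviously controlled. The uniform bound comes from positivity: the identity $\Psi(\lambda)=\Psi(\mu)-(\lambda-\mu)R(\lambda,A_0)\Psi(\mu)$ (both sides map $f_\partial$ to the solution of the same boundary-value problem) shows $0\le\Psi(\lambda)\le\Psi(\mu)$ for $\lambda\ge\mu$, hence $0\le\Psi\Psi(\lambda)\le\Psi\Psi(\mu_0)$ and, since on $L^1$ positive domination implies norm domination, $\|\Psi\Psi(\lambda)\|\le\|\Psi\Psi(\mu_0)\|<1$ for all $\lambda\ge\mu_0>\omega$. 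With this, $f_\lambda\to f$, density of $\mathcal D(A_0)$ gives density of $\mathcal D(A)$, and the rest of your argument (generation, positivity via the exponential formula, and the extension of mass conservation from $\mathcal D(A)_+$ to all nonnegative $f$, which implicitly uses $\lambda R_\lambda\to I$ strongly and hence the same uniform bound) goes through.
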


\begin{proof}[Proof of Theorem~$\ref{th:gen}$]
First we translate our notation to that from Theorem~\ref{MTK}.
Let $L^1=E$,  $L_{\partial}^1=L^1[0,\infty)$, 
$\Gamma=Y$, $\Gamma_{\partial}=[0,\infty)$,
$\Psi_0=\mathcal T$, $\Psi =\mathcal P$, $Af=-\frac{\partial f}{\partial a}-pf $ and
\[
\mathcal D=\Big\{f\in E\colon\,\, \frac{\partial f}{\partial a}\in E, \,\,pf\in E\}.
\]

(1): Since $Af=-\frac{\partial f}{\partial a}-pf$, the nullspace  $\mathcal N(\lambda I-A)$ is the set of functions $f\in \mathcal D$ satisfying equation
 \[
\frac{\partial f}{\partial a} +pf+\lambda f=0.
\]
Solving this equation we obtain that $f(x_b,a)=f(x_b,0)\Phi_{\lambda}(x_b,a)$, where
\[
\Phi_{\lambda}(x_b,a)=e^{-\lambda a}\Phi(x_b,a)=\exp\bigg\{-\int_0^a(\lambda+p(x_b,s))\,ds\bigg\}.
\]
Thus the operator $\Psi_0$ restricted to $\mathcal N(\lambda I-A)$ is invertible 
and the inverse operator $\Psi(\lambda)\colon L^1[0,\infty) \to \mathcal N(\lambda I-A)$ given by 
$\Psi(\lambda)f(x_b,a) =f(x_b)\Phi_{\lambda}(x_b,a)$ is positive.

(2): Since $\Psi=\mathcal P$  we  check whether 	$\|\mathcal P\Psi(\lambda)\|<1$ for $\lambda>0$.
Take $f\in L^1[0,\infty)$, $f\ge 0$,  and  let  $\Theta_{\lambda}(x_b,a)=p(x_b,a)\Phi_{\lambda}(x_b,a)=e^{-\lambda a}q(x_b,a)$.
Then 
\begin{align*}
\int_0^{\infty}(\mathcal P\Psi(\lambda)f)(x_b)\,dx_b&=\int_0^{\infty}\int_0^{\infty} P_a (f(\cdot)\Theta_{\lambda}(\cdot,a))(x_b)\,da\,dx_b\\
&= \int_0^{\infty}\int_0^{\infty}f(x_b)\Theta_{\lambda}(x_b,a)\,da\,dx_b.
\end{align*}
Thus, we need to estimate the integral $\int_0^{\infty}\Theta_{\lambda}(x_b,a)\,da$. 
From (A5) it follows that
\begin{align*}
\int_0^{\infty}\Theta_{\lambda}(x_b,a)\,da&=\int_0^{\infty}e^{-\lambda a}q(x_b,a)\,da\\
&\le \int_0^{\varepsilon} q(x_b,a)\,da+ 
\int_{\varepsilon}^{\infty} e^{-\lambda \varepsilon}q(x_b,a)\,da\\
&\le 1-\varepsilon\big( 1-e^{-\lambda\varepsilon}\big) <1.
\end{align*}

(3): The operator $A_0$ generates a positive $C_0$-semigroup $\{P_0(t)\}_{t\ge 0}$ on $E$ given by 
\[
P_0(t)f(x_b,a)=
\begin{cases}
f(x_b,a-t)\exp\big\{-\int_{a-t}^a p(x_b,s)\,ds\big\}
\quad\textrm{for $a> t$,}\\
0 \quad\textrm{for $a<t$}.
\end{cases}
\]

(4): If $f\in \mathcal{D}_+$, then
\begin{align*}
\int_Y Af(x_b,a)\,dx_b\,da
&=-\int_Y\bigg( \frac{\partial f}{\partial a}(x_b,a)+p(x_b,a)f(x_b,a)\bigg)  \,da \,dx_b\\
&=\int_0^{\infty}\mathcal Tf(x_b)\,dx_b-\int_Y p(x_b,a)f(x_b,a)  \,da \,dx_b.
\end{align*}
Since $\Psi_0=\mathcal T$, $\Gamma=Y$, and $\Gamma_{\partial}=[0,\infty)$ we have 
\[
\int_Y Af(x_b,a)\,dx_b\,da-\int_0^{\infty} \Psi_0 f(x_b,a)\,dx_b=
-\int_Y p(x_b,a)f(x_b,a)  \,da \,dx_b\le 0.
\]

(5): If $f\in \mathcal{D}(A)$ and $f$ is nonnegative, then
\begin{align*}
\int_Y Af(x_b,a)\,dx_b\,da&=\int_0^{\infty}\mathcal Tf(x_b)\,dx_b-\int_Y p(x_b,a)f(x_b,a)  \,da \,dx_b\\
&=\int_0^{\infty}\mathcal Pf(x_b)\,dx_b-\int_Y p(x_b,a)f(x_b,a)  \,da \,dx_b=0.\qedhere
\end{align*}

\end{proof}

%\begin{remark}
%In the proof of Theorem~\ref{th:gen} we does not use the assumption that the function $q$ is continuous. 
%To formulate our model it is sufficient to assume that $q$ is measurable, but we will use continuity of $q$ in Section~\ref{s:condition-K}.
%\end{remark}

\section{Restriction of the semigroup}
 \label{s:restr-semigroup}
We now propose some restriction of the semigroup $\{P(t)\}_{t\ge0}$ to a set of densities related to our model.
We add further assumptions concerning the function $q$, which we need to define this restriction and to prove results about asymptotic properties of our model:
\begin{itemize}
\item[(B1)] there exists a continuous function $\da\colon [0,\infty)\to [0,\infty)$
such that $q(x_b,a)>0$ for $a>\da(x_b)$ and 
$q(x_b,a)=0$ for $a<\da(x_b)$ if $\da(x_b)>0$,
\item[(B2)] there exists a constant $M_1>0$ such that for each $x_b$ we have
 \[
 \int_0^{\infty} aq(x_b,a)\,da\le M_1.
 \]
\end{itemize}
From assumption that $q(x_b,a)>0$ for $a>\da(x_b)$ it follows immediately that $\ga(x_b)=\infty$ for each $x_b\ge 0$, and consequently
$Y=[0,\infty)^2$. 

Next we restrict the semigroup $\{P(t)\}_{t\ge0}$ to some space $L^1(X)$, where $X=\mathbf I\times [0,\infty)$ and $\mathbf I\subseteq [0,\infty)$ is an interval   
determined by our model. We assume that the initial immune status $x$ is zero. Then after an infection the immunity is boosted and $x$ becomes $G(0)$ and over time the immune status
decreases. Since $q(x_b,a)>0$  for $a\in (\da(x_b),\infty)$  (or for  $a\in [0,\infty)$ if $q(x_b,0)>0$),  the immune status  after the next infection is 
a number from the non-degenerate interval $ I_1=\{G(\pi_aG(0))\colon q(G(0),a)>0\}$ and $G(0)\in \overline  I_1$, where $\overline  I_1$ denotes the closure of the set $ I_1$.    
We define by induction the sequence of intervals 
\[
 I_{n+1}=\{G(\pi_ax_b)\colon \, x_b\in  I_n,\,\,\,q(x_b,a)>0\}. 
\]
Then $\overline  I_n\subseteq \overline  I_{n+1}$ for each $n\ge 1$. Let $\mathbf I$ be the closure of the set $\bigcup_{n=1}^{\infty}  I_n$. 
In order to characterize some properties of the set $\mathbf I$
 we introduce the notion of cumulative flow.
Let $n \ge 1$ and $\mathbf t=(t_0,t_1,\dots,t_{n-1})$ be  such that  $t_p>0$
for $p=0,\dots,n-1$. Take $x\ge 0$ and assume that the sequence $x_0,\dots,x_n$ is given by the recurrent formula 
$x_0=x$, $x_{p+1}=G(\pi_{t_p}x_p)$  for $p=0,\dots,n-1$, 
providing that $q(x_p,t_p)>0$, and $y=x_n$.
The function $x\mapsto y$,
%\[
%y=\boldsymbol  \pi_{\mathbf  t}(x)=G(\pi_{t_{n-1}}(\dots G(\pi_{t_1}G(\pi_{t_0}x))\dots))
%\]
denoted by 
$y=\boldsymbol  \pi_{\mathbf  t}(x)$,
is called a \textit{cumulative flow} which joins   $x$ with 
$y$. 

\begin{lemma}
\label{l:com-flow}
The set  $\mathbf I$ has the following properties:
\begin{enumerate}[\rm(a)] 
\item
 If $x\in \mathbf I$ and $y=\boldsymbol  \pi_{\mathbf  t}(x)$,
 then $y\in \mathbf I$, 
\item if $x$ and $y$ are interior points of $\mathbf I$, then there exists
a cumulative flow which joins $x$ with $y$. 
\end{enumerate}
\end{lemma}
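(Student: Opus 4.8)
The plan is to work with the one-step reachable set $J(x)=\{G(\pi_a x)\colon a>\da(x)\}$, where by (B1) the admissibility condition $q(x,a)>0$ is equivalent to $a>\da(x)$. Since $a\mapsto\pi_a x$ is continuous, strictly decreasing, and tends to $0$ as $a\to\infty$, while $G$ is continuous, $J(x)=G\big((0,\pi_{\da(x)}x)\big)$ is an interval whose closure contains $G(0)$. The defining recursion then reads $I_{n+1}=\bigcup_{x_b\in I_n}J(x_b)$, and the points reachable from $G(0)$ by a cumulative flow of length $n$ are exactly those of $I_n$, so that $U:=\bigcup_n I_n$ is the full reachable set from $G(0)$. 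Because the closures $\overline{I_n}$ are nested, $U$ is an interval, $\mathbf I=\overline U$, and a short argument gives $\operatorname{int}\mathbf I=\operatorname{int}U\subseteq U$; this inclusion will furnish every interior target.

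For part (a) I would first treat $x\in U$: if $x\in I_N$, then the first admissible step of the flow sends $x$ into $I_{N+1}$, and by induction $\boldsymbol\pi_{\mathbf t}(x)\in I_{N+n}\subseteq\mathbf I$. For an arbitrary $x\in\mathbf I$ I would approximate it by $x^{(k)}\in U$ with $x^{(k)}\to x$. For fixed $\mathbf t$ the map $\boldsymbol\pi_{\mathbf t}$ is a finite composition of the continuous maps $z\mapsto G(\pi_{t_p}z)$, hence continuous; moreover each admissibility condition $t_p>\da(x_p)$ is an open condition in $x_p$ because $\da$ is continuous, so it persists for $k$ large at every one of the finitely many steps. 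Thus $\boldsymbol\pi_{\mathbf t}(x^{(k)})$ is eventually defined, lies in $U\subseteq\mathbf I$, and converges to $\boldsymbol\pi_{\mathbf t}(x)$; closedness of $\mathbf I$ then gives $y\in\mathbf I$.

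For part (b) it suffices to show that the total reachable set $R(x)=\bigcup_n R_n(x)$ contains $U$ for every $x>0$; indeed interior points of $\mathbf I$ are positive (since $\mathbf I\subseteq[\inf G,\infty)$ with $\inf G>0$), and the target $y$ lies in $\operatorname{int}\mathbf I\subseteq U$. The key bootstrap is this: from $x$ I step to points $z=G(\pi_a x)\in J(x)$ with $a$ large, which lie as close to $G(0)$ as desired; by continuity of $z\mapsto\pi_{\da(z)}z$ these satisfy $\pi_{\da(z)}z\to m_1:=\pi_{\da(G(0))}G(0)$, so the union of the intervals $J(z)=G\big((0,\pi_{\da(z)}z)\big)$ over such $z$ contains $G\big((0,m_1)\big)=I_1$. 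Hence $R_2(x)\supseteq I_1$, and since $R_{n+1}(x)\supseteq\bigcup_{z\in R_n(x)}J(z)$ while $I_{k+1}=\bigcup_{z\in I_k}J(z)$, an induction yields $R_{n+1}(x)\supseteq I_n$ for all $n$. Therefore $R(x)\supseteq U\ni y$, and tracing the construction produces an explicit admissible cumulative flow from $x$ to $y$.

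I expect the bootstrap in (b) to be the main obstacle: a single point can be driven arbitrarily close to $G(0)$ but never exactly to it, so one cannot simply invoke $J(G(0))=I_1$. The remedy is to exploit the continuity of $\da$ to make the reachable flow-length $\pi_{\da(z)}z$ approach the full value $m_1$ and to recover all of $I_1$ as a union of the slightly shorter intervals $G\big((0,\pi_{\da(z)}z)\big)$. The remaining ingredients — that $U$ is an interval with $\operatorname{int}\mathbf I\subseteq U$, and that admissibility is an open condition preserved under the approximation in (a) — are routine once $\da$ is known to be continuous.
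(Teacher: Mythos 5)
Your proof is correct, and part (b) follows a genuinely different route from the paper's; part (a) is the easy argument the paper dismisses as ``obvious,'' written out properly. Both proofs of (b) turn on the obstacle you identify --- from $x$ one can reach points arbitrarily close to $G(0)$ but never $G(0)$ itself --- but they defuse it at opposite ends. The paper works at the \emph{target}: from $x$ one reaches every point of a one-sided interval abutting $G(0)$; it then asserts, by continuity of the cumulative flow, that $x$ can be joined to every point of a one-sided interval $(y,y+\rho)$ or $(y-\rho,y)$, and finally captures $y$ itself by applying this statement to interior points $y'$ with $y-\rho/2<y'<y$, so that $y\in(y',y'+\rho/2)$. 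You work at the \emph{source}: since $J(z)=G\bigl((0,\pi_{\da(z)}z)\bigr)$ and the horizon $z\mapsto\pi_{\da(z)}z$ is continuous, the one-step sets of the reachable points $z$ near $G(0)$ exhaust $I_1=G\bigl((0,m_1)\bigr)$ as an increasing union, and induction then gives $R_{n+1}(x)\supseteq I_n$, hence $R(x)\supseteq U\ni y$. Your route recovers the \emph{entire} reachable set of $G(0)$ rather than one-sided intervals, and thereby avoids the paper's two asserted continuity steps (the transfer to a one-sided interval at $y$, and the uniformity in $y'$ of the final trick), which are the least transparent parts of the published proof. The price is the reduction $\operatorname{int}\mathbf I\subseteq U$, which you defer; it is true and short, but not for the reason you state --- nestedness of the closures $\overline{I_n}$ does not make $U$ an interval. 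The observation that does the work is that the one-step sets are nested in one another, $J(z_1)\subseteq J(z_2)$ whenever $\pi_{\da(z_1)}z_1\le\pi_{\da(z_2)}z_2$, so each $I_n$ (as you define it) and $U$ itself equal $G\bigl((0,m)\bigr)$ for suitable $m$, a continuous image of an interval; this also rules out the a priori possible puncture of $U$ at the point $G(0)$. Since the paper leans on the same unproved structural fact when it calls $I_n$ an interval and places $y$ in $\operatorname{int}I_n$, your gloss is at the paper's own level of rigor. One last inaccuracy, harmless here: by continuity of $q$ the claimed equivalence $q(x,a)>0\iff a>\da(x)$ can fail only when $a=\da(x)=0$, so your $J(z)$ may omit the single point $G(z)$ that the paper's recursion admits; cumulative flows require $t_p>0$, and such endpoint discrepancies change neither closures nor interiors, so nothing in the argument is affected.
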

\begin{proof}
The first property is obvious, but the second one requires some justification.   
Since $q(x,a)>0$ for $a>\da(x)$, there exists $\delta>0$ such that 
$(0,\delta)\subseteq \{\pi_ax\colon q(x,a)>0\}$ and $G(\delta)\ne G(0)$. 
Then after the first jump we can join the point $x$ with each point of one of the intervals $(G(0),G(\delta))$ or $(G(\delta),G(0))$. 
If $y$ is an interior point of $\mathbf I$, then there exists $n\ge 1$ such that $y$ is also an interior point of the interval  $I_n$.
Thus we find $\rho>0$ such that we can join $x$ by a cumulative flow with each point of one of the sets
$A=(y,y+\rho)$ or $A=(y-\rho,y)$. Consider the case $A=(y,y+\rho)$. The case $A=(y-\rho,y)$ is similar.
From the continuity of the cumulative flow it follows that there is a neighbourhood $V$ of $y$ 
such that for each point $y'\in V$ we can join $x$ with each point of the set $(y',y'+\rho/2)$.
But $y\in (y',y'+\rho/2)$ if $y-\rho/2<y'<y$ which proves that we can join $x$ with $y$.
\end{proof}
  
%\item[(b)] if $x$ is an interior point of $\mathbf I$, then there exists
%a cumulative flow which joins $G(0)$ with $x$. 
%\item[(c)] for each point $x\in\mathbf I$ there
%exist $\varepsilon>0$ and  a cumulative flow which joins $x$ with each point of the interval $P_{\varepsilon}$,
%where $P_{\varepsilon}=(G(0),G(0)+\varepsilon)$
%if $G$ is increasing in a neighbourhood of zero  or $P_{\varepsilon}=(G(0)-\varepsilon,G(0))$ 
%otherwise.

%\item[(c)] for each $\varepsilon>0$ and each point $x\in\mathbf I$ there 
%exists  a cumulative flow which joins $x$ with some point from the set 
%$(G(0)-\varepsilon,G(0)+\varepsilon)$.
%\end{itemize}
%\vskip5mm

In particular, from (a) it follows that if $f\in L^1(Y)$ and $f(x_b,a)=0$  for 
$x_b\notin \mathbf I$ and $a\ge 0$, then $P(t)f(x_b,a)=0$  for 
$x_b\notin \mathbf I$ and $a\ge 0$. Thus we can restrict the semigroup  
$\{P(t)\}_{t\ge0}$ to the space $L^1(X)$ and from now on 
 by $\{P(t)\}_{t\ge0}$ we denote this restriction.

\section{Asymptotic behaviour of stochastic semigroups}
 \label{s:gen-asympt}
We now recall some general results on asymptotic stability and sweeping of 
stochastic semigroups which we will use in the next sections. 

Let  $(X,\Sigma,\mu)$ be a $\sigma$-finite measure space, 
$D$ be the set of densities and    
$\{P(t)\}_{t\ge0}$ be a  stochastic semigroup on $L^1=L^1(X,\Sigma,\mu)$.
Since the iterates of stochastic  operators also
form a (discrete time) 
semigroup we use notation $P(t)=P^t$ for their  powers 
and we formulate most of definitions and results for both types of semigroups
without distinguishing between them.

The semigroup $\{P(t)\}_{t\ge 0}$ is called
\textit{asymptotically stable} if  there exists a density $f_*$   such that
\begin{equation}
\label{d:as}
\lim _{t\to\infty}\|P(t)f-f_*\|=0 \quad \text{for}\quad f\in D.
\end{equation}
From (\ref{d:as}) it follows immediately that  $f_*$ is {\it invariant\,} with respect to
the semigroup $\{P(t)\}_{t\ge 0}$, i.e.  $P(t)f_*=f_*$ for
each $t\ge 0$.
A stochastic semigroup $\{P(t)\}_{t\ge 0}$ is
called \textit{sweeping}
from a set $B\in\Sigma$ if 
\begin{equation*}
\lim_{t\to\infty}\int_B P(t)f(x)\,\mu(dx)=0
\end{equation*}
for every  $f\in D$. 

In order to formulate a theorem on asymptotic stability of stochastic semigroups we need to introduce an auxiliary notion.

A stochastic semigroup $\{P(t)\}_{t\ge 0}$
is called \textit{partially integral} if there exists a measurable
function $k\colon (0,\infty)\times X\times X\to[0,\infty]$, called a
{\it kernel},\index{kernel} such that
\begin{equation*}
P(t)f(y)\ge\int_X k(t,x,y)f(x)\,\mu(dx)
\end{equation*}
for every density $f$ and
\begin{equation*}
\int_X\int_X  k(t,x,y)\,\mu(dy)\,\mu(dx)>0
\end{equation*}
for some $t>0$.

\begin{theorem}[\cite{PR-jmaa2}]
\label{asym-th2}
Let $\{P(t)\}_{t\ge 0}$ be a continuous time partially integral stochastic
semigroup. Assume that the  semigroup $\{P(t)\}_{t\ge 0}$ has
a unique invariant density $f_*$. If $f_*>0$ a.e., then the semigroup
$\{P(t)\}_{t\ge 0}$ is asymptotically stable.
\end{theorem}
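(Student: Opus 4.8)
The plan is to obtain asymptotic stability from the dichotomy (the Foguel alternative) that partially integral stochastic semigroups satisfy, and to use the hypothesis $f_*>0$ a.e.\ only at the very end, in order to exclude the sweeping alternative. First I would recall the structural dichotomy established in the authors' earlier work \cite{PR-JMMA2016,PR-SD2017}: a partially integral stochastic semigroup that possesses an invariant density is either asymptotically stable or sweeping from every set of finite measure. The engine behind this dichotomy is an \emph{overlapping supports} argument. Using the kernel $k$ from the definition of partial integrality, one bounds $P(t)f$ from below by a fixed nonnegative integral operator applied to $f$; since $\iint k(t,x,y)\,\mu(dy)\,\mu(dx)>0$ for some $t$, this minorant is nontrivial, and the strict positivity of the invariant density $f_*$ guarantees that the supports of $P(t)f$ and $P(t)g$ must eventually overlap $\supp f_*$ on a set of positive measure. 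This overlapping property is the precise irreducibility that drives the whole argument.

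The second step converts overlapping supports into convergence by means of the Lasota--Mackey lower-function criterion: a stochastic semigroup is asymptotically stable provided there is a nonzero $h\ge 0$ with $\lim_{t\to\infty}\|(P(t)f-h)^-\|=0$ for every density $f$. Combining the kernel lower bound with the positivity of $f_*$, I would produce such an $h$ as a suitable minorant built from $k$ and $f_*$; because $f_*$ is the \emph{unique} invariant density, the attracting lower function can only be $f_*$ itself, and the criterion then yields $\|P(t)f-f_*\|\to 0$.

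The third step excludes the competing sweeping alternative directly, and this is where the hypothesis $f_*>0$ a.e.\ is used most transparently. If the semigroup were sweeping from some set $B$ with $0<\mu(B)<\infty$, then $\int_B P(t)f_*\,d\mu\to 0$; but $P(t)f_*=f_*$ by invariance, so $\int_B f_*\,d\mu=0$, contradicting $f_*>0$ a.e. (such a $B$ exists by $\sigma$-finiteness of $\mu$ together with the positivity of $f_*$). Hence only the stable branch of the dichotomy can occur, and asymptotic stability follows.

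The main obstacle is establishing the dichotomy itself, that is, ruling out any intermediate asymptotic behaviour in which trajectories neither converge to $f_*$ nor escape. This rests entirely on the overlapping-supports analysis of the kernel: one must show that the family of densities whose supports meet $\supp f_*$ is invariant and attracting, and that uniqueness of the invariant density forces a single ergodic component with no residual periodicity. The delicate part is the measure-theoretic bookkeeping for the kernel, namely its joint measurability, lower-bound estimates that are uniform for $t$ in compact intervals, and the passage from the discrete iterates of a single $P(t_0)$ to the continuous-time semigroup; once this is in place, the lower-function criterion closes the argument.
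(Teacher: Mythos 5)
First, note that this paper does not prove Theorem~\ref{asym-th2} at all: it is imported verbatim, with proof, from the earlier work \cite{PR-jmaa2}, so your proposal has to be judged on its own merits rather than against a proof in the text. Judged that way, it has a fatal gap: nowhere in your three steps is the \emph{continuity in time} of the semigroup actually used, yet the theorem is false without it. Concretely, take $X=[0,1]\cup[2,3]$ with Lebesgue measure, let $S$ be the swap $Sf(y)=f(y-2)$ on $[2,3]$, $Sf(y)=f(y+2)$ on $[0,1]$, let $Kf=\bigl(\int_{[0,1]}f\bigr)\mathbf 1_{[2,3]}+\bigl(\int_{[2,3]}f\bigr)\mathbf 1_{[0,1]}$, and set $P=(1-\varepsilon)S+\varepsilon K$. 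This discrete-time stochastic operator is partially integral (the $\varepsilon K$ part is integral with kernel of positive double integral), and a direct computation shows its unique invariant density is $f_*\equiv\tfrac12$, which is positive a.e.; nevertheless $P\mathbf 1_{[0,1]}=\mathbf 1_{[2,3]}$ and $P\mathbf 1_{[2,3]}=\mathbf 1_{[0,1]}$, so $\|P^nf-f_*\|=1$ for $f=\mathbf 1_{[0,1]}$ and all $n$, and moreover \emph{no} nonzero lower function $h$ exists (the even iterates force $h=0$ on $[2,3]$, the odd ones force $h=0$ on $[0,1]$). Every ingredient you invoke --- partial integrality, unique positive invariant density, the Lasota--Mackey lower-function criterion, the exclusion of sweeping by invariance --- is present in this example, and yet the conclusion fails. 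Hence your Step 2, ``I would produce such an $h$ as a suitable minorant built from $k$ and $f_*$,'' is not a routine combination of the kernel bound with positivity: it is precisely the point where cyclic behaviour must be excluded, which is possible only because $t\mapsto P(t)f$ is continuous, and you relegate this to ``bookkeeping'' without any indication of how continuity enters.

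The dichotomy you start from is also not available in the form you state it. The Foguel alternative of \cite{PR-JMMA2016,PR-SD2017} (used in this paper as Theorem~\ref{col-sw}) assumes condition (K) --- which is strictly stronger than partial integrality --- and concludes sweeping from compact sets \emph{when no invariant density exists}; it says nothing of the form ``invariant density exists $\Rightarrow$ stable or sweeping from every finite-measure set.'' Indeed such a dichotomy is self-defeating: as your own Step 3 observes, existence of an invariant density already rules out sweeping from any set it charges, so the dichotomy would make every partially integral semigroup with an invariant density asymptotically stable, which is false (take the direct sum of two disjoint ergodic components, or the periodic example above). So either your dichotomy silently carries the full hypotheses (continuous time, uniqueness, positivity), in which case it \emph{is} the theorem to be proved and your sketch of its ``overlapping supports engine'' inherits the gap of Step 2, or it is false. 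Note also that \cite{PR-JMMA2016,PR-SD2017} postdate \cite{PR-jmaa2} by many years; the actual proof of Theorem~\ref{asym-th2} in \cite{PR-jmaa2} is a Harris/zero--two-law type argument in which continuity in $t$ is the load-bearing ingredient that kills the cyclic decomposition --- exactly the ingredient your proposal never deploys.
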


It should be underlined that if each invariant density is positive, then an invariant density is unique or does not exist.    
Indeed, if a stochastic semigroup has two different invariant  densities $f_1$ and $f_2$,
then the function $h=(f_1-f_2)^+/ \|(f_1-f_2)^+ \|$ is also an invariant density and $h(x)=0$ on a set of positive measure $\mu$.

New results concerning positive operators on Banach lattices similar in spirit to Theorem~\ref{asym-th2} may be found in \cite{Gerlach-Gluck1,Martin-Gluck2}.

In order to formulate some result concerning  sweeping property we assume additionally that  $(X,\rho)$ is a separable
metric space and $\Sigma=\mathcal B(X)$ is the $\sigma$-algebra of Borel subsets of $X$. We also assume that ${\rm P}(t,x,A)$ is 
the \textit{transition probability function} for the semigroup 
$\{P(t)\}_{t\ge0}$, i.e. ${\rm P}(t,x,A)=P^*(t)\mathbf 1_A(x)$ for $t\ge 0$,
$x\in X$, $A\in\Sigma$. In particular if the semigroup $\{P(t)\}_{t\ge 0}$
is generated by a Markov process $(\eta_t)_{t\ge 0}$, then  
${\rm P}(t,x,A)$ is the transition probability function of this process.
We also use the notation  ${\rm P}(t,x,A)$ for stochastic operators, but now $t\in \mathbb N_+$, and instead of ${\rm P}(1,x,A)$ we briefly write   ${\rm P}(x,A)$.

We say that a stochastic  semigroup $\{P(t)\}_{t\ge0}$
satisfies condition (K) at a point $x_0\in X$ if
there exist  an $\varepsilon >0$,  a $t>0$,
and a measurable function
$\chi\ge 0$ such that $\int \chi(y)\, \mu(dy)>0$ and
\begin{equation}
\label{w-eta3}
{\rm P}(t,x,dy)\ge \chi(y)\,\mu(dy) \quad \textrm{for $x\in B(x_0,\varepsilon)$}.
\end{equation}
If a stochastic  semigroup $\{P(t)\}_{t\ge0}$ satisfies 
condition (K) at each point $x_0\in X$, then we briefly say
that this semigroup satisfies condition (K).
It is clear that if a stochastic semigroup satisfies condition (K) 
at least at one point, then this semigroup is partially integral.

\begin{theorem}[\cite{PR-JMMA2016}]
\label{col-sw}
If a (continuous or discrete time) stochastic semigroup $\{P(t)\}_{t\ge 0}$   satisfies condition {\rm (K)} and has no invariant densities, then
$\{P(t)\}_{t\ge 0}$ is sweeping from compact sets.
\end{theorem}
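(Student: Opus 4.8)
The plan is to argue by contraposition: assuming that $\{P(t)\}_{t\ge0}$ is \emph{not} sweeping from some compact set, I will produce an invariant density, contradicting the hypothesis. It is convenient to pass first to discrete time. Set $T=P(1)$. If $T$ had an invariant density $g$, then $g_*=\int_0^1 P(s)g\,ds$ would be a density (its norm is $\int_0^1\|P(s)g\|\,ds=1$) invariant for the whole semigroup, since $r\mapsto P(r)g$ is $1$-periodic (because $P(1)g=g$) and hence
\[
P(t)g_*=\int_t^{t+1}P(r)g\,dr=\int_0^1 P(r)g\,dr=g_*.
\]
Thus the semigroup has an invariant density if and only if $T$ does, and it suffices to produce an invariant density for the single stochastic operator $T$ under the failure of sweeping.

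First I would upgrade condition (K) to a \emph{uniform} minorization over compact sets. Fix a compact $K\subseteq X$. At each $x_0\in K$ condition (K) provides $\varepsilon,t,\chi$ with ${\rm P}(t,x,dy)\ge\chi(y)\,\mu(dy)$ on $B(x_0,\varepsilon)$. Cover $K$ by finitely many such balls and align the finitely many times by means of the Chapman--Kolmogorov relation: if ${\rm P}(t_i,x,dy)\ge\chi_i(y)\,\mu(dy)$ on $B_i$, then for $t^*\ge t_i$ one has ${\rm P}(t^*,x,dy)\ge\big(P(t^*-t_i)\chi_i\big)(y)\,\mu(dy)$, and $P(t^*-t_i)\chi_i$ is again a nonnegative $L^1$ function of positive integral. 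Taking $t^*$ to be the largest of the $t_i$ and $\psi$ a suitable nonnegative combination, one obtains a single time $t^*>0$ and a function $0\le\psi\in L^1$ with $\int\psi\,d\mu>0$ such that
\[
P(t^*)u\ \ge\ \Big(\int_K u\,d\mu\Big)\,\psi\qquad\text{for every density }u .
\]

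Now suppose $\{P(t)\}$ is not sweeping from the compact set $K$. Then there are a density $f$, a number $\delta>0$ and times $t_n\to\infty$ with $\int_K P(t_n)f\,d\mu\ge\delta$. Applying the minorization to $u=P(t_n)f$ gives $P(t_n+t^*)f\ge\delta\psi=:h$, a \emph{fixed} nonnegative $L^1$ function with $\|h\|>0$, along the times $s_n=t_n+t^*\to\infty$; by monotonicity $P(s_n+r)f\ge P(r)h$ for every $r\ge0$, so the trajectory keeps a uniformly positive amount of mass near $\supp h$. I would then form the Cesàro averages (in continuous time $\tfrac1T\int_0^T P(s)f\,ds$, or the discrete averages $\tfrac1N\sum_{k<N}T^kf$ for $T=P(1)$). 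Any weak-$*$ limit point $\nu$ of these averages, taken in the space of finite Borel measures, is $P(t)$-invariant by the usual Krylov--Bogolyubov cancellation of the boundary terms. Writing $\nu=\nu P(t^*)$ in the measure sense and inserting the minorization yields $\nu\ge\nu(K)\,\psi\,d\mu$ on every compact $K$, so as soon as $\nu\ne0$ its absolutely continuous part is nontrivial; a standard invariance argument for the absolutely continuous part of an invariant measure of a partially integral operator then furnishes an honest invariant density, contradicting the hypothesis and proving the theorem.

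The delicate point — and the main obstacle — is this last step: controlling the \emph{loss of mass} in the weak-$*$ limit, namely both escape to infinity and passage to a singular part. The lower bound $P(s_n)f\ge h$ is what guarantees that a fixed positive amount of mass persists on $\supp h$ and so forces $\nu\ne0$, while condition (K), through the uniform minorization above, is precisely what prevents the limiting invariant measure from being purely singular and lets one extract an $L^1$ density. In effect this is the conservative--dissipative (Hopf) dichotomy for the partially integral operator $T$: condition (K) places every compact set in the conservative part the moment some compact set fails to be swept, and on the conservative part an invariant density exists. Making this rigorous — in particular securing the tightness of the averages and the Dunford--Pettis uniform integrability needed to keep the limit in $L^1$ — is the technical heart of the argument.
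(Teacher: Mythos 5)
The paper does not prove this theorem at all: it is imported from \cite{PR-JMMA2016}, where it is a corollary of an asymptotic decomposition theorem whose proof runs through Hopf's conservative--dissipative dichotomy, the construction of a positive, locally integrable subinvariant function (Lemma 8 there), and the sweeping criterion of \cite{R-b95} (Corollary 3); Remark~\ref{strong-sweeping} of the present paper sketches exactly that route for the operator $T$. Your proposal — minorization plus Krylov--Bogolyubov averaging — is therefore a genuinely different strategy, and your closing paragraph correctly senses that it must ultimately reduce to the Hopf dichotomy; but as written it has gaps precisely at the step you call the technical heart, and they are not cosmetic.

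First, a repairable one: the uniform minorization $P(t^*)u\ge\big(\int_K u\,d\mu\big)\psi$ with a \emph{single} $\psi$ does not follow from the covering argument, since the minorants $P(t^*-t_i)\chi_i$ attached to different balls may have pairwise disjoint supports, and the only candidate valid for all densities simultaneously, their pointwise minimum, can vanish a.e.; one can only conclude $P(t^*)u\ge N^{-1}\big(\int_K u\,d\mu\big)P(t^*-t_{i(u)})\chi_{i(u)}$ with an index depending on $u$, which a pigeonhole argument along the sequence $t_n$ then fixes. The decisive gap is the averaging step. Failure of sweeping from a compact $K$ gives only $\int_K P(t_n)f\,d\mu\ge\delta$ along \emph{some} sequence $t_n\to\infty$, which may have zero density in $[0,\infty)$; Ces\`aro averages are blind to sets of times of zero density, so $\tfrac1T\int_0^T P(s)f\,ds$ can converge vaguely to the zero measure even though sweeping fails. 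Your lower bound $P(s_n+r)f\ge P(r)h$ does not prevent this, because $\tfrac1T\int_0^T P(r)h\,dr$ may itself dissipate — that it does not is essentially the statement you are trying to prove, so the argument is circular there (in particular the claim that the trajectory ``keeps a uniformly positive amount of mass near $\supp h$'' is unjustified: the mass of $P(r)h$ can escape every compact set). Separately, even granted a nonzero vague limit $\nu$, its invariance is not ``the usual cancellation'': to pass $P(r)$ through a vague limit one needs $P^*(r)$ to preserve $C_0(X)$, a Feller-type property that a general stochastic semigroup on $L^1(X,\mathcal B(X),\mu)$ need not have and that condition (K) does not supply. (The step you flagged as delicate at the very end is in fact the unproblematic one: the absolutely continuous part of a nonzero finite invariant measure is subinvariant, $P(t)g\le g$, and subinvariance together with preservation of the integral forces $P(t)g=g$, so an invariant density drops out at once.) Filling the two real gaps pushes you back to the subinvariant-function/Hopf machinery of \cite{PR-JMMA2016}, i.e., to the cited proof.
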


It is worth  mentioning that condition (K) allows us to formulate 
a general theorem on the asymptotic decomposition of stochastic and substochastic
semigroups \cite{PR-JMMA2016,PR-SD2017} from which the simple conclusion is 
Theorem~\ref{col-sw}.

The aim of the paper is to prove some results concerning asymptotic 
stability of the semigroup $\{P(t)\}_{t\ge 0}$ introduced in Section~\ref{s:restr-semigroup}.
We will proceed with the following scheme.
First we check  whether the semigroup satisfies condition (K), in particular 
$\{P(t)\}_{t\ge 0}$ is partially integral. In order to prove its stability we will apply Theorem~\ref{asym-th2}. Thus we need to check whether it has a unique invariant density $f_*$ and $f_*>0$ a.e. We show that $f_*(x_b,a)=h_*(x_b)\Phi(x_b,a)$,
where $h_*\in L^1(\mathbf I)$ is a positive fixed point of some stochastic operator $T$ on $L^1(\mathbf I)$.
We show that the operator $T$ satisfies condition (K)
and that any invariant density of this operator is strictly positive. 
Thus we have two cases. The first:  $T$ has an invariant density. Then
the semigroup $\{P(t)\}_{t\ge 0}$ has a unique invariant density $f_*>0$ a.e., and consequently it is asymptotically stable.
The second: $T$ has no invariant density. Then $T$ is sweeping from compact sets.  Observe that if the set $\mathbf I$ is bounded, then    
$\mathbf I$ is a compact set, and the operator $T$ cannot be sweeping from compact sets. Hence, if the set $\mathbf I$ is bounded, the semigroup 
$\{P(t)\}_{t\ge 0}$ is asymptotically stable.
In the case when the set $\mathbf I$ is unbounded we add some additional assumptions concerning
 our model that exclude sweeping property of $T$, so $T$ has an invariant density, which again implies 
 asymptotic stability of $\{P(t)\}_{t\ge 0}$.

\section{Condition (K)}
 \label{s:condition-K}
We now return to the semigroup $\{P(t)\}_{t\ge0}$ introduced in Section~\ref{s:restr-semigroup}
and we will prove that it satisfies condition (K).

\begin{lemma}
\label{lemma-K-lok}
Assume {\rm (A1)--(A5), (B1)},  and 
that there exist constants $t^0_1,t^0_2>0$, $x_b^0\in \mathbf I$, and $a^0\ge 0$  
such that
$q(x_b^0,a^0+t^0_1)>0$, 
$q(x_b^1,t^0_2)>0$, where $x_b^1=G(\bar x)$ and $\bar x=\pi_{a^0+t^0_1}x^0_b$,
and 
\begin{equation}
\label{nier-K}
F(\bar x)G'(\bar x)\ne F(G(\bar x)),\quad G'(\pi_{t^0_2}G(\bar x))\ne 0.
\end{equation}
Then the semigroup $\{P(t)\}_{t\ge 0}$ satisfies 
condition {\rm (K)} at the point $(x_b^0,a^0)$.
\end{lemma}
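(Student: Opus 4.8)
The plan is to verify condition (K) at $(x_b^0,a^0)$ by isolating the contribution of trajectories that perform exactly two jumps before time $t$. Recall that (K) asks for a time $t$, a radius $\varepsilon>0$ and a function $\chi\ge0$ with $\int\chi\,d\mu>0$ such that ${\rm P}(t,(x_b,a),dy)\ge\chi(y)\,dy$ for all $(x_b,a)\in B((x_b^0,a^0),\varepsilon)$. Writing $P(t)$ through its jump (Dyson--Phillips) expansion, all terms are nonnegative, so it suffices to bound the two-jump term from below. One jump cannot suffice: after a single jump the image of a point is a one-dimensional curve in $Y$ (both the boosted status and the elapsed time are driven by the single jump time), carrying no two-dimensional density; two jumps supply the two free parameters needed to generate an absolutely continuous part.

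Fix a starting point $(x_b,a)$ near $(x_b^0,a^0)$, and let $\alpha$ be the elapsed time at the first jump (which then occurs at real time $\alpha-a$) and $\beta$ the elapsed time at the second. After the first jump the status is $z=G(\pi_\alpha x_b)$ with the clock reset to $0$; after the second it is $y_b=G(\pi_\beta z)$; the process then flows without jumping up to time $t$, so the final elapsed time is $\gamma=t+a-\alpha-\beta$. Thus the two-jump part of ${\rm P}(t,(x_b,a),\cdot)$ is the push-forward, under $\Psi_{x_b,a}\colon(\alpha,\beta)\mapsto(y_b,\gamma)$, of a measure $\rho(x_b,a,\alpha,\beta)\,d\alpha\,d\beta$, where $\rho$ collects the two jump densities and the intervening survival factor (it suffices to retain the single branch of each Frobenius--Perron operator corresponding to the reference trajectory). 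Explicitly $\rho=\frac{q(x_b,\alpha)}{\Phi(x_b,a)}\,q(z,\beta)\,\Phi(y_b,\gamma)$. At the reference values $\alpha^0=a^0+t_1^0$, $\beta^0=t_2^0$ the hypotheses $q(x_b^0,a^0+t_1^0)>0$ and $q(x_b^1,t_2^0)>0$, the continuity of $q$ (A4), and the positivity of the survival factors (which holds since (B1) gives $\ga(x_b)=\infty$), make $\rho$ continuous and strictly positive on a neighbourhood of the reference point.

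It remains to show $\Psi_{x_b,a}$ is a local diffeomorphism near $(\alpha^0,\beta^0)$. Put $w=\pi_\alpha x_b$, $z=G(w)$, $v=\pi_\beta z$. Since $\partial\gamma/\partial\alpha=\partial\gamma/\partial\beta=-1$, $\partial y_b/\partial\beta=G'(v)F(v)$, and $\partial y_b/\partial\alpha=G'(v)\,\pi_\beta'(z)\,G'(w)F(w)$, where $\pi_\beta'(z)$ is the derivative of the flow in its spatial argument, one gets
\[
\det D\Psi_{x_b,a}=G'(v)\big[F(v)-\pi_\beta'(z)\,G'(w)\,F(w)\big].
\]
Using the flow identity $F(\pi_\beta z)=\pi_\beta'(z)F(z)$ (obtained by differentiating $\frac{d}{d\beta}\pi_\beta z=F(\pi_\beta z)$ in $z$), this simplifies to
\[
\det D\Psi_{x_b,a}=G'(v)\,\pi_\beta'(z)\,\big[F(G(w))-G'(w)\,F(w)\big].
\]
As $\pi_\beta'(z)=\exp\big(\int_0^\beta F'(\pi_s z)\,ds\big)>0$, at the reference point ($w=\bar x$, $v=\pi_{t_2^0}G(\bar x)$) this determinant is nonzero precisely when $G'(\pi_{t_2^0}G(\bar x))\ne0$ and $F(G(\bar x))\ne F(\bar x)G'(\bar x)$, which are exactly the conditions in \eqref{nier-K}.

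Finally, by the inverse function theorem $\Psi_{x_b^0,a^0}$ maps a neighbourhood of $(\alpha^0,\beta^0)$ diffeomorphically onto a neighbourhood $V_0$ of $(y_b^0,\gamma^0)$, where $\gamma^0=t-t_1^0-t_2^0$; choosing $t>t_1^0+t_2^0$ makes $\gamma^0>0$, and since $y_b^0\in\mathbf I$ by Lemma~\ref{l:com-flow}(a), $V_0$ meets $X$ in a set of positive measure. By continuous dependence of $\Psi_{x_b,a}$, of its derivative and of $\rho$ on $(x_b,a)$, there are $\varepsilon>0$ and an open $V\subseteq V_0$ with $\mu(V)>0$ such that for every $(x_b,a)\in B((x_b^0,a^0),\varepsilon)$ the map $\Psi_{x_b,a}$ is a diffeomorphism onto a set containing $V$ and the push-forward density $\rho/|\det D\Psi_{x_b,a}|$ is bounded below on $V$ by a common constant $c>0$. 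Taking $\chi=c\,\mathbf 1_V$ yields ${\rm P}(t,(x_b,a),dy)\ge\chi(y)\,dy$ on that ball, which is condition (K) at $(x_b^0,a^0)$. I expect the main technical burden to be the first step, namely making precise the jump expansion of $P(t)$ and the identification of its two-jump term with the stated push-forward, together with the uniform positivity of $\rho$ and the uniformity of the lower bound over the neighbourhood of starting points; the Jacobian computation, by contrast, collapses cleanly to \eqref{nier-K} once the flow identity is invoked.
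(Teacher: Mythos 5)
Your proposal is correct and takes essentially the same route as the paper's proof: isolate the two-jump contribution to ${\rm P}(t,(x_b,a),\cdot)$, parametrize the final state by the two jump times, and use the flow identity $F(\pi_\beta z)=\pi_\beta'(z)F(z)$ to show the Jacobian of that map is nonzero precisely under \eqref{nier-K}, yielding the same determinant factorization as in the paper. The only differences are cosmetic — you parametrize by elapsed times instead of real jump times, and you spell out the survival factors and the uniform choice of the image set $V$ over the ball of starting points, details the paper treats implicitly.
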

\begin{proof}
Let $(\eta_t)_{t\ge0}$ be the process defined in Section~\ref{intro}
such that $\eta_0=(x_b,a)$
and assume that this process  
until time $t>t_1+t_2$
has exactly two jumps at moments
$t_1$, $t_1+t_2$.
Then
\[
\eta_{t_1^-}=(x_b,a+t_1),\quad
\eta_{t_1}=(G(\pi_{a+t_1}x_b),0),\quad
\eta_{t_2^-}=(G(\pi_{a+t_1}x_b),t_2),
 \]
 \[
 \eta_{t_2}=(G(\pi_{t_2}G(\pi_{a+t_1}x_b)),0),\quad
 \eta_{t}=(G(\pi_{t_2}G(\pi_{a+t_1}x_b)),t-t_1-t_2).
  \]
 From continuity of the functions $q,F,G'$ we can choose $\delta>0$ and $\varepsilon>0$ such that 
\begin{gather}
\label{nier-q}
 q(x_b,a+t_1)\ge \varepsilon,\quad
 q(G(\pi_{a+t_1}x_b),t_2)\ge \varepsilon,\\
 \label{nier-FG}
 F(x)G'(x)\ne F(G(x)),\quad G'(y)\ne 0
 \end{gather}
 for $a,t_1,t_2\ge 0$, $x_b\in \mathbf I$,
$|a-a^0|<\delta$, $|x_b-x_b^0|<\delta$,
$|t_1-t_1^0|<\delta$, $|t_2-t_2^0|<\delta$, 
$x=\pi_{a+t_1}x_b$, and $y=\pi_{t_2}G(\pi_{a+t_1}x_b)$. Fix $x_b$ and $a$. Consider a function 
$\Theta$ defined on 
the rectangle 
$R=(t_1^0-\delta,t_1^0+\delta)\times (t_2^0-\delta,t_2^0+\delta)$   
by 
\[
\Theta(t_1,t_2)=(G(\pi_{t_2}G(\pi_{a+t_1}x_b)),t-t_1-t_2).
\]
Then we have 
\[
{\rm P}(t,(x_b,a),\Theta(B))\ge \varepsilon^2 |B| 
\]
for any measurable subset $B$ of $R$, where $|B|$ is the Lebesgue measure of $B$. It remains to check that $\det\Theta'(t_1,t_2)\ne 0$
for $(t_1,t_2)\in R$,
 because then
there exists $c>0$ such that 
${\rm P}(t,(x_b,a),A)\ge c|A|$ for $A\subset \Theta(R)$
and condition \eqref{w-eta3} holds with $\chi=c\mathbf 1_{\Theta(R)}$.

Since $\frac{\partial \Theta_2}{\partial t_1}
=\frac{\partial \Theta_2}{\partial t_2}=-1$ we have 
$\det\Theta'(t_1,t_2)= \frac{\partial \Theta_1}{\partial t_2}
-\frac{\partial \Theta_1}{\partial t_1}$. 
Thus, we need to check that 
$\frac{\partial \Theta_1}{\partial t_2}\ne 
\frac{\partial \Theta_1}{\partial t_1}$.
We have
\begin{align*}
\frac{\partial \Theta_1}{\partial t_1}&=
\frac{G'(y)F(y)}{F(G(\pi_{a+t_1}x_b))}G'(\pi_{a+t_1}x_b)
F(\pi_{a+t_1}x_b),\\
\frac{\partial \Theta_1}{\partial t_2}&=
G'(y)F(y).
\end{align*}
Therefore,  
$\det\Theta'(t_1,t_2)\ne 0$ if and only if \eqref{nier-FG} holds.
\end{proof}

\begin{proposition}
\label{prop:K} Assume {\rm (A1)--(A5), (B1)}. Then the semigroup 
$\{P(t)\}_{t\ge 0}$
satisfies condition~{\rm (K)}.
\end{proposition}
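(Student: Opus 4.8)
The goal is to upgrade the local condition (K) from Lemma~\ref{lemma-K-lok} to condition (K) at every point of $X = \mathbf I \times [0,\infty)$. Lemma~\ref{lemma-K-lok} gives condition (K) at a point $(x_b^0, a^0)$ provided we can find auxiliary data $t_1^0, t_2^0 > 0$ and an intermediate point $x_b^1 = G(\bar x)$ (with $\bar x = \pi_{a^0+t_1^0}x_b^0$) such that two positivity conditions on $q$ hold and the two non-degeneracy conditions~\eqref{nier-K} are satisfied. The plan is therefore to show that for an arbitrary point $(x_b^0, a^0) \in X$ such data always exist under assumptions (A1)--(A5) and (B1).

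First I would handle the positivity of $q$. By (B1), $q(x_b, a) > 0$ precisely for $a > \underline a(x_b)$ (or for all $a \ge 0$ when $\underline a(x_b) = 0$), and $\underline a$ is continuous. So for \emph{any} choice of $x_b^0$ and $a^0$ I can pick $t_1^0 > 0$ large enough that $a^0 + t_1^0 > \underline a(x_b^0)$, guaranteeing $q(x_b^0, a^0 + t_1^0) > 0$; this determines $\bar x = \pi_{a^0+t_1^0}x_b^0$ and hence $x_b^1 = G(\bar x)$. Then I can pick $t_2^0 > 0$ with $t_2^0 > \underline a(x_b^1)$ so that $q(x_b^1, t_2^0) > 0$. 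The only subtlety is that I may need some freedom in $t_1^0$ (and thus in $\bar x$) to also arrange the non-degeneracy conditions~\eqref{nier-K}, so I would keep $t_1^0$ ranging over an open interval rather than fixing it immediately.

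The heart of the argument is the non-degeneracy conditions $F(\bar x)G'(\bar x) \ne F(G(\bar x))$ and $G'(\pi_{t_2^0}G(\bar x)) \ne 0$. For the second condition: by (A3) the set where $G'$ vanishes has measure zero, and as $t_2^0$ varies over an open interval the point $\pi_{t_2^0}G(\bar x)$ traces out a nondegenerate arc (since $F < 0$ for positive argument makes $\pi$ strictly monotone in time), so all but a measure-zero set of admissible $t_2^0$ avoid the zeros of $G'$; I keep one such $t_2^0$. For the first condition, I would argue that the identity $F(\bar x)G'(\bar x) = F(G(\bar x))$ cannot hold on a whole interval of $\bar x$-values: if it did, then $(\log(G(\bar x)))' \cdot G(\bar x)= G'(\bar x)$ would force $G$ to satisfy a rigid ODE relating it to the flow $\pi$, and I would derive a contradiction with (A2) ($G(x) > x$) — most cleanly, I would note that equality $F(\bar x)G'(\bar x) = F(G(\bar x))$ is exactly the statement that $G$ conjugates the flow $\pi$ near $\bar x$ to itself, i.e. $G(\pi_s \bar x) = \pi_s(G(\bar x))$, which together with $G(x) > x$ and $F < 0$ cannot persist on an interval. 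Since varying $t_1^0$ over an open interval varies $\bar x$ over a nondegenerate arc, I can select $t_1^0$ making the first inequality strict while preserving the earlier positivity choices.

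The main obstacle is verifying that the two non-degeneracy conditions can be satisfied \emph{simultaneously} with the positivity constraints, for an arbitrary base point; the conditions~\eqref{nier-K} are each violated only on small (measure-zero or rigidly constrained) exceptional sets, but I must confirm that these exceptional sets do not conspire to block every admissible choice of $(t_1^0, t_2^0)$. The clean way to finish is a genericity argument: fix any $t_1^0$ giving $q(x_b^0, a^0+t_1^0)>0$ and the first inequality of~\eqref{nier-K}, which is possible because that inequality fails only on a nowhere-dense set of $t_1^0$; then among the $t_2^0$ with $q(x_b^1,t_2^0)>0$, the set violating $G'(\pi_{t_2^0}G(\bar x))\ne 0$ has measure zero, so a valid $t_2^0$ remains. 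Feeding these into Lemma~\ref{lemma-K-lok} yields condition (K) at $(x_b^0,a^0)$, and since this point was arbitrary, the semigroup $\{P(t)\}_{t\ge 0}$ satisfies condition (K).
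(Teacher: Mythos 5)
Your overall strategy is the same as the paper's: reduce to Lemma~\ref{lemma-K-lok} at an arbitrary point $(x_b^0,a^0)$, use (B1) to secure the two positivity conditions on $q$, and use the fact that $\{x\colon G'(x)=0\}$ has measure zero together with strict monotonicity of $t\mapsto \pi_t G(\bar x)$ to secure the second inequality in \eqref{nier-K}. Those parts are correct. The gap is in your treatment of the first inequality $F(\bar x)G'(\bar x)\ne F(G(\bar x))$. Your rigidity claim --- that this identity, equivalently the local commutation $G(\pi_s\bar x)=\pi_s(G(\bar x))$, cannot persist on an interval of $\bar x$-values under (A2) and $F<0$ --- is false, and with it the assertion that the bad set of $t_1^0$ is nowhere dense. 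Counterexample: take $F(x)=-x$, $G(x)=2x$ for $x\ge 1$, and $G(x)=1+x^2$ for $0\le x\le 1$. This $G$ is $C^1$, satisfies $G(x)>x$ for all $x\ge 0$ and $G(0)=1>0$, has $G'>0$ everywhere, and $q$ can be chosen so that (A4), (A5), (B1) hold; yet $F(x)G'(x)=-2x=F(G(x))$ identically on $(1,\infty)$. Commutation with the flow is perfectly compatible with $G(x)>x$ away from the origin (a scaling map commutes with a linear flow), so if the arc of admissible $\bar x$ you allow yourself happens to lie in $(1,\infty)$, then \emph{every} $t_1^0$ in your open interval violates \eqref{nier-K}, and the genericity argument you rely on breaks down.

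What rescues the argument --- and is exactly the paper's proof --- is the behaviour at $x=0$, which you never invoke: since $F(0)=0$ while $F(G(0))<0$ (because $G(0)>0$), continuity of $F$, $G$, $G'$ yields an $\varepsilon>0$ such that $F(x)G'(x)\ne F(G(x))$ for all $x\in[0,\varepsilon)$. Since $F<0$ on $(0,\infty)$ forces $\pi_t x_b^0\downarrow 0$ as $t\to\infty$, and (B1) guarantees $q(x_b^0,a)>0$ for all $a>\underline a(x_b^0)$, you can take $t_1^0$ so large that simultaneously $q(x_b^0,a^0+t_1^0)>0$ and $\bar x=\pi_{a^0+t_1^0}x_b^0\in[0,\varepsilon)$; the first inequality of \eqref{nier-K} then holds automatically, with no genericity in $t_1^0$ needed. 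In short, the admissible values of $\bar x$ form an interval accumulating at $0$, and it is only near $0$ --- not generically --- that the first non-degeneracy condition is guaranteed; note also that in my counterexample the condition does hold on $[0,1)$, exactly as this reasoning predicts.
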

 \begin{proof}
 Since $F(0)=0$, $F(G(0))\ne 0$, there exists an $\varepsilon>0$ such that 
$F(x)G'(x)\ne F(G(x))$ for $x\in [0,\varepsilon)$.
Take a point $(x_b^0,a^0)\in X$. Then from conditions (A1) and (B1) it follows that there exists $t_1^0$ such that 
$q(x_b^0,a^0+t^0_1)>0$ and  $\pi_{a^0+t^0_1}x^0_b\in [0,\varepsilon)$. Thus we have $F(\bar x)G'(\bar x)\ne F(G(\bar x))$ for $\bar x=\pi_{a^0+t^0_1}x^0_b$.
From (B1) it follows that $q(G(\bar x),a)>0$ for $a>\da(G(\bar x))$.
The inequality $G'(\pi_{t^0_2}G(\bar x))\ne 0$ for some $t^0_2>\da(G(\bar x))$ follows from the fact that the function $t\mapsto\pi_tG(\bar x)$ is strictly decreasing and the set $\{x\colon G'(x)=0\}$ has Lebesgue measure zero. 
According to Lemma~\ref{lemma-K-lok} 
the semigroup $\{P(t)\}_{t\ge 0}$ satisfies 
condition {\rm (K)} at the point $(x_b^0,a^0)$.
\end{proof}

\section{Invariant density}
\label{s:inv-dens}
In this section we assume that the functions $F$, $G$, $q$ satisfy assumptions 
(A1)--(A5) and (B1)--(B2).

\begin{lemma}
\label{l:inv-dens-P-T}
%A function $f_*$ is invariant density for the semigroup
%$\{P(t)\}_{t\ge0}$ if and only if 
%\begin{equation}
%\label{f*1}
%f_*(x_b,a)=h_*(x_b)\frac{q(x_b,a)}{p(x_b,a)}
%\end{equation}
%and $h_*$ is 
%a nonzero and positive invariant function 
%for the stochastic operator
The semigroup $\{P(t)\}_{t\ge0}$ has a unique invariant density if and only if the operator 
$T$ defined on the space $L^1(\mathbf I)$ by the formula
\begin{equation}
\label{f*2}
Th(x_b)=\int_0^{\infty}\Big(P_a\big(q(\cdot,a)h(\cdot)\big)\Big)(x_b)\,da
\end{equation}
has a unique invariant density.
\end{lemma}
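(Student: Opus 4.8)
The plan is to set up a one-to-one correspondence between the invariant densities of $\{P(t)\}_{t\ge0}$ and the fixed points of $T$, and then to check that this correspondence preserves cardinality. Since $\{P(t)\}_{t\ge0}$ is a $C_0$-semigroup with generator $\mathcal A$, a density $f_*$ is invariant if and only if $f_*\in\mathcal D(\mathcal A)$ and $\mathcal Af_*=0$, that is, $\frac{\partial f_*}{\partial a}=-p f_*$ together with the boundary condition $f_*(x_b,0)=\mathcal Pf_*(x_b)$. Solving this first-order equation in $a$ (exactly as in the computation of $\mathcal N(\lambda I-A)$ in the proof of Theorem~\ref{th:gen}, now with $\lambda=0$) yields $f_*(x_b,a)=h_*(x_b)\Phi(x_b,a)$, where $h_*(x_b):=f_*(x_b,0)$ and $\Phi(x_b,0)=1$. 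Substituting this form into $\mathcal Pf_*$ and using the identity $p(x_b,a)\Phi(x_b,a)=q(x_b,a)$ from~\eqref{function p1}, the boundary condition collapses precisely to $h_*=Th_*$. Thus every invariant density of $\{P(t)\}_{t\ge0}$ produces a nonzero nonnegative fixed point $h_*$ of $T$.

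Conversely, I would take any nonnegative fixed point $h$ of $T$ and set $f(x_b,a)=h(x_b)\Phi(x_b,a)$. Integrability is the step where assumption (B2) enters: since $\int_0^{\infty}\Phi(x_b,a)\,da=\int_0^{\infty}a\,q(x_b,a)\,da\le M_1$, we get $\|f\|_E=\int_{\mathbf I}h(x_b)\big(\int_0^{\infty}\Phi(x_b,a)\,da\big)\,dx_b\le M_1\|h\|$, so $f\in E$. By construction $f$ satisfies $\frac{\partial f}{\partial a}=-pf$, and running the computation above backwards gives $f(x_b,0)=h(x_b)=Th(x_b)=\mathcal Pf(x_b)$, so $f\in\mathcal D(\mathcal A)$ and $\mathcal Af=0$; hence $f$ is invariant. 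Writing $\alpha(h)(x_b,a)=h(x_b)\Phi(x_b,a)$ and $\beta(f)=\mathcal Tf=f(\cdot,0)$, the two identities $\beta\alpha=\mathrm{id}$ (because $\Phi(x_b,0)=1$) and $\alpha\beta=\mathrm{id}$ (because every invariant $f$ has the form $h\Phi$) show that $\alpha$ and $\beta$ are mutually inverse, positive, linear bijections between the nonnegative fixed points of $T$ and the nonnegative invariant functions of $\{P(t)\}_{t\ge0}$.

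Finally, I would pass from invariant functions to invariant densities. Both $T$ and $\{P(t)\}_{t\ge0}$ are stochastic (for $T$ this follows from the Frobenius--Perron property $\int P_ag=\int g$ and $\int_0^{\infty}q(x_b,a)\,da=1$), so their invariant densities are exactly the normalized nonzero nonnegative invariant functions. As $\alpha$ and $\beta$ are positive linear bijections they carry rays to rays, and normalizing -- by $\|h\|$ on the $T$-side and by the finite constant $\int_{\mathbf I}h(x_b)m(x_b)\,dx_b$ on the $P$-side, with $m(x_b)=\int_0^{\infty}\Phi(x_b,a)\,da\le M_1$ -- induces a genuine bijection between the set of invariant densities of $T$ and that of $\{P(t)\}_{t\ge0}$. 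In particular one set is a singleton exactly when the other is, which is the assertion. The main obstacle I anticipate is the rigorous $L^1$/Sobolev-level justification of the two steps that here look purely formal: that $\mathcal Af_*=0$ genuinely forces the representation $f_*=h_*\Phi$ for $f_*\in\mathcal D(\mathcal A)\subset W_1(Y)$ (so that the trace $h_*=\mathcal Tf_*$ is meaningful and is nonzero whenever $f_*\ne0$), and the symmetric check that $\alpha(h)$ lands in $\mathcal D(\mathcal A)$ with the correct trace; the normalization bookkeeping, controlled entirely by (B2), is then routine by comparison.
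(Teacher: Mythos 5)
Your proposal is correct and follows essentially the same route as the paper: both reduce $\mathcal Af_*=0$ to the representation $f_*(x_b,a)=h_*(x_b)\Phi(x_b,a)$, use $p(x_b,a)\Phi(x_b,a)=q(x_b,a)$ to collapse the boundary condition to $h_*=Th_*$, invoke (B2) for membership in $E$, and normalize to pass between fixed points and densities. Your explicit $\alpha/\beta$ bijection and ray-preservation bookkeeping is just a more structured phrasing of the correspondence the paper states tersely, and your flagged concern about trace/domain regularity is present at the same level of informality in the paper's own proof.
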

\begin{proof} 
A function $f_*$ is invariant with respect to the semigroup
$\{P(t)\}_{t\ge0}$ if and only if $f_*\in \mathcal D(\mathcal A)$ and $\mathcal Af_*=0$.
A function $f_*\in D\cap \mathcal D(\mathcal A)$ satisfies the condition $\mathcal Af_*=0$ if
\[
\frac{\partial f_*}{\partial a}=-pf_*,
\]
which gives
\[
f_*(x_b,a)=f_*(x_b,0)\exp\Big(-\int_0^a p(x_b,s)\,ds\Big)=f_*(x_b,0)\Phi(x_b,a).
\]
 Define  $h_*(x_b)=f_*(x_b,0)$ and assume that $h_*\in L^1(\mathbf I)$.
Then $f_*\in E$. Indeed
\begin{align*}
\int_X f_*(x_b,a)\,dx_b\,da&=\int_{\mathbf I} h_*(x_b)\bigg(\int_0^{\infty} \Phi(x_b,a)\,da\bigg) dx_b\\
&=\int_{\mathbf I} h_*(x_b)\bigg(\int_0^{\infty}\int_a^{\infty} q(x_b,r)\,dr \,da\bigg) dx_b.
\end{align*}
According to (B2), we have
\[
\int_0^{\infty}\int_a^{\infty} q(x_b,r)\,dr \,da=\int_0^{\infty}\int_0^r q(x_b,r)\,da \,dr
=\int_0^{\infty}rq(x_b,r)\,dr\le M_1,
\]
and consequently
\[
\int_X f_*(x_b,a)\,dx_b\,da\le M_1\int_{\mathbf I} h_*(x_b)\,dx_b.
\]

Since $pf_*=ph_*\Phi=h_*q$, we have
\[
\|pf_*\|_E=\int_{\mathbf I} h_*(x_b) \int_0^{\infty} q(x_b,a)\,da\,dx_b=\int_{\mathbf I} h_*(x_b) \,dx_b<\infty.
\] 
Hence $pf_*\in E$. Moreover, if $f_*$ is an invariant density, then  $h_*\in L^1(\mathbf I)$, $h_*\ge 0$, and $h_*\ne 0$.
As $\frac{\partial f_*}{\partial a}=-pf_*$, we have also $\frac{\partial f_*}{\partial a} \in E$.

Since $pf_*=h_*q$, we have  $\mathcal Pf_*(x_b)=\int_0^{\infty}\Big(P_a\big(h_*(\cdot)q(\cdot,a)\big)\Big)(x_b)\,da$.
Therefore, $f_*$ is an invariant density  if and only if 
$h_*=Th_*$, where the operator $T$ is given by $\eqref{f*2}$.
Thus, the problem of existence of an invariant density
with respect to the semigroup $\{P(t)\}_{t\ge0}$
reduces to the proof that the operator $T$ has a nonzero and positive invariant function $h_*$ in the space $L^1(\mathbf I)$.
The function $\bar h=h_*/\|h_*\|_{L^1(\mathbf I)}$  is an invariant density for $T$.  
\end{proof}

Observe that  $T$ is a stochastic operator on the space $L^1(\mathbf I)$:
\begin{align*}
\int_{\mathbf I}Th(x_b)\,dx_b&=\int_{\mathbf I}\int_0^{\infty}\Big(P_a\big(q(\cdot,a)h(\cdot)\big)\Big)(x_b)\,da\,dx_b\\
&=\int_0^{\infty}\int_{\mathbf I}\Big(P_a\big(q(\cdot,a)h(\cdot)\big)\Big)(x_b)\,dx_b\,da\\
&=\int_0^{\infty}\int_{\mathbf I}q(x_b,a)h(x_b)\,dx_b\,da=\int_{\mathbf I}h(x_b)\,dx_b.
\end{align*}
The adjoint operator of $T$ is given by the formula
\begin{equation}
\label{def-T*}
T^*f(x_b)=\int_0^{\infty}q(x_b,a)P_a^*f(x_b)\,da=\int_0^{\infty}q(x_b,a)f(G(\pi_ax_b))\,da. 
\end{equation}

\begin{lemma}
\label{l:K-for-T}
The operator $T$ satisfies condition ${\rm (K)}$.
\end{lemma}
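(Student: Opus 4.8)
The plan is to verify condition (K) at an arbitrary point $x_b^0\in\mathbf I$ using a \emph{single} jump, which suffices because $T$ acts on the one-dimensional space $L^1(\mathbf I)$ and its transition function $P(x_b,A)=T^*\mathbf 1_A(x_b)=\int_0^{\infty}q(x_b,a)\mathbf 1_A(G(\pi_ax_b))\,da$ already carries one free parameter $a$. This is the discrete-time analogue of Lemma~\ref{lemma-K-lok}, but simpler: there two jumps were needed to produce a density on the two-dimensional state space, whereas here one jump produces a density on $\mathbf I$.

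First I would select a suitable jump time $a^0$. By (B1) we have $q(x_b^0,a)>0$ for every $a>\da(x_b^0)$, and since every point of $\mathbf I$ is positive the flow keeps $\pi_ax_b^0>0$, whence $F(\pi_ax_b^0)\ne0$, for all such $a$. Because the set $\{x\colon G'(x)=0\}$ has Lebesgue measure zero by (A3) and $a\mapsto\pi_ax_b^0$ is a $C^1$ diffeomorphism, the set of $a>\da(x_b^0)$ with $G'(\pi_ax_b^0)=0$ is null; I can therefore fix $a^0>\da(x_b^0)$ with $q(x_b^0,a^0)>0$ and $G'(\pi_{a^0}x_b^0)\ne0$. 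Setting $\theta_{x_b}(a)=G(\pi_ax_b)$, so that $\theta_{x_b}'(a)=G'(\pi_ax_b)F(\pi_ax_b)$, continuity of $q$, $F$, $G'$ and of the flow yields $\delta>0$ and constants $\varepsilon>0$, $0<c_0\le C_0<\infty$ with $q(x_b,a)\ge\varepsilon$ and $c_0\le|\theta_{x_b}'(a)|\le C_0$ whenever $|x_b-x_b^0|<\delta$ and $|a-a^0|<\delta$. For each such $x_b$ the map $\theta_{x_b}$ is then a $C^1$ diffeomorphism of $(a^0-\delta,a^0+\delta)$ onto an open interval whose interior contains $y^0=G(\pi_{a^0}x_b^0)$.

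Performing the change of variables $y=\theta_{x_b}(a)$ in the formula for $P(x_b,A)$ gives, for every $A$ contained in the image interval,
\[
P(x_b,A)\ge\int_{\{|a-a^0|<\delta\}\cap\theta_{x_b}^{-1}(A)}q(x_b,a)\,da\ge\frac{\varepsilon}{C_0}\,\bigl|A\cap\theta_{x_b}((a^0-\delta,a^0+\delta))\bigr|.
\]
The step that requires the most care is passing from the $x_b$-dependent image interval $\theta_{x_b}((a^0-\delta,a^0+\delta))$ to one fixed interval valid throughout a neighbourhood of $x_b^0$. Since the endpoints $\theta_{x_b}(a^0\pm\delta)$ depend continuously on $x_b$ and $y^0$ lies in the interior of the image, I can choose $\delta'\in(0,\delta]$ and $r>0$ so that the fixed interval $J_0=(y^0-r,y^0+r)$ satisfies $J_0\subseteq\theta_{x_b}((a^0-\delta,a^0+\delta))$ for all $x_b$ with $|x_b-x_b^0|<\delta'$.

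Setting $\chi=\frac{\varepsilon}{C_0}\mathbf 1_{J_0}$, the displayed estimate then gives $P(x_b,dy)\ge\chi(y)\,dy$ for every $x_b\in B(x_b^0,\delta')$, with $\int\chi(y)\,dy=\frac{2\varepsilon r}{C_0}>0$. This is precisely condition (K) at $x_b^0$ with $t=1$, and as $x_b^0\in\mathbf I$ was arbitrary, the operator $T$ satisfies condition (K). The only genuinely delicate points are the generic choice of $a^0$ avoiding the null set $\{G'=0\}$ and the extraction of the common subinterval $J_0$; the remaining estimates are routine applications of the change-of-variables formula and uniform continuity on compact sets.
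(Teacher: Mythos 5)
Your proof is correct and takes essentially the same approach as the paper's: fix $x_b^0$, choose $a^0$ with $q(x_b^0,a^0)>0$ and $G'(\pi_{a^0}x_b^0)\ne 0$, get uniform positive bounds on $q$ and on $\bigl|\tfrac{d}{da}G(\pi_a x_b)\bigr|$ near $(x_b^0,a^0)$, change variables $y=G(\pi_a x_b)$ in $T^*\mathbf 1_A$, and extract a fixed interval around $y^0$ common to all the image intervals, yielding $\chi$ as a multiple of an indicator. The only (cosmetic) difference is that the paper makes the last extraction quantitative via $|y(a)-y(a^0)|\ge\delta|a-a^0|$, where you invoke continuity of the endpoints in $x_b$.
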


\begin{proof}
Fix a point $x_b^0\in \mathbf I$. Then there exists $a^0>0$ such that
$q(x_b^0,a^0)>0$ and $G'(\pi_{a^0}x_b^0)\ne 0$. Let $y^0=G(\pi_{a^0}x_b^0)$.  Then we find sufficiently small $\delta\in (0,a^0)$ such that
$q(x_b,a)\ge \delta$  and
\begin{equation}
\label{in-da}
\delta\le\bigg|\dfrac{d}{da}G(\pi_{a}x_b)\bigg| \le\delta^{-1} 
\end{equation}
for all $(x_b,a)$ such that $x_b\in B(x_b^0,\delta)$ and $a\in [a^0-\delta,a^0+\delta]$. Then
\begin{align*}
{\rm P}(x_b,A)&=T^*\mathbf 1_A(x_b)=\int_0^{\infty}q(x_b,a)\mathbf 1_A(G(\pi_ax_b))\,da\\
&\ge \delta\int_{a^0-\delta}^{a^0+\delta}\mathbf 1_A(G(\pi_ax_b))\,da
\ge \delta^2\int_{y(a^0)-\delta^2}^{y(a^0)+\delta^2}\mathbf 1_A(y)\,dy
\end{align*}
for $x_b\in B(x_b^0,\delta)$ and $a\in [a^0-\delta,a^0+\delta]$, where $y(a)=G(\pi_ax_b)$.
The last inequality in the above formula follows from \eqref{in-da}.
Indeed, since $|y'(a)|\le \delta^{-1}$ we have $|da/dy|\ge \delta$. The inequality   
$\delta\le |y'(a)|$ implies $|y(a)-y(a^0)|\ge \delta |a-a^0|$, and
consequently 
$[y(a^0)-\delta^2,y(a^0)+\delta^2]\subset y\big([a^0-\delta,a^0+\delta]\big)$.
We now fix $\varepsilon\in (0,\delta)$ such that  
\[
|G(\pi_{a^0}x_b)-G(\pi_{a^0}x_b^0)|\le \delta^2/2
\]
for $x_b\in B(x_b^0,\varepsilon)$. Let $B=(y^0-\delta^2/2,y^0+\delta^2/2)$. 
Then 
\[
B\subseteq [y(a^0)-\delta^2,y(a^0)+\delta^2]
\]
and we have ${\rm P}(x_b,A)\ge \delta^2|A\cap B|$ for $x_b\in B(x_b^0,\varepsilon)$.
Thus condition (K) holds with $\chi=\delta^2\mathbf 1_B$.  
\end{proof}

\begin{lemma}
\label{l:uniq-inv-dens}
If $h_*$ is an invariant density of the operator $T$, then $h_*>0$ a.e.
\end{lemma}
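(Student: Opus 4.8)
The plan is to show that the essential support $\Omega=\{x_b\in\mathbf I:\ h_*(x_b)>0\}$ of the invariant density is, up to a null set, the whole interior of $\mathbf I$. The engine is the invariance identity $h_*=T^nh_*$ for every $n$, combined with a lower bound on the $n$-step kernel of $T$ obtained by chaining the one-step estimate established in the proof of Lemma~\ref{l:K-for-T} along a cumulative flow supplied by Lemma~\ref{l:com-flow}(b).

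First I would record that $\Omega$ has positive Lebesgue measure, since $\int_{\mathbf I}h_*\,dx_b=1$, and that the boundary of the interval $\mathbf I$ is negligible; hence $\Omega$ meets $\operatorname{int}\mathbf I$ in positive measure and, by the Lebesgue density theorem, possesses a density point $x_*\in\operatorname{int}\mathbf I$. Fix now an arbitrary interior point $y_*\in\operatorname{int}\mathbf I$; the goal is to show that $y_*$ belongs to $\Omega$ modulo null sets.

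By Lemma~\ref{l:com-flow}(b) there is a cumulative flow joining $x_*$ to $y_*$, i.e.\ jump times $s_0,\dots,s_{n-1}$ and intermediate points $x_*=z_0,z_1,\dots,z_n=y_*$ with $z_{p+1}=G(\pi_{s_p}z_p)$ and $q(z_p,s_p)>0$. Exactly as in Proposition~\ref{prop:K}, using that $t\mapsto\pi_t z_p$ is strictly monotone while $\{G'=0\}$ is Lebesgue-null by (A3), I would perturb each $s_p$ slightly---keeping $q(z_p,s_p)>0$, which holds on an open set of times by (B1)---so that in addition $G'(\pi_{s_p}z_p)\neq0$ at every step. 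At each such step the computation in the proof of Lemma~\ref{l:K-for-T} furnishes a one-step kernel bound $k(z,z')\ge c_p\,\mathbf 1_{U_p}(z)\,\mathbf 1_{V_p}(z')$, where $U_p$ is a neighbourhood of $z_p$, $V_p$ a (sub)interval around $z_{p+1}$, and $c_p>0$. Shrinking the intervals so that $V_p\subseteq U_{p+1}$ and composing (Chapman--Kolmogorov) yields a constant $c>0$ and neighbourhoods $U\ni x_*$, $V\ni y_*$ with $k_n(x,y)\ge c\,\mathbf 1_U(x)\,\mathbf 1_V(y)$, so that $T^nf\ge c\Big(\int_U f\,dx_b\Big)\mathbf 1_V$ for every $f\in D$.

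Applying this to $f=h_*$ and using $T^nh_*=h_*$ gives $h_*(y)\ge c\Big(\int_U h_*\,dx_b\Big)\mathbf 1_V(y)$ a.e. Since $x_*$ is a density point of $\Omega$, we have $\int_U h_*\,dx_b>0$, whence $h_*>0$ a.e.\ on the neighbourhood $V$ of $y_*$; in particular $y_*\in\Omega$. As $y_*\in\operatorname{int}\mathbf I$ was arbitrary, $\operatorname{int}\mathbf I\subseteq\Omega$ up to a null set, i.e.\ $h_*>0$ a.e. I expect the only delicate point to be this chaining of the one-step kernels together with the perturbation securing $G'\neq0$ at each intermediate jump; it is a multi-jump version of the two-jump argument already carried out in Lemma~\ref{lemma-K-lok} and Proposition~\ref{prop:K}, and everything else is routine bookkeeping.
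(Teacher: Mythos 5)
Your overall strategy is sound and genuinely different from the paper's: you work on the primal side, chaining the local kernel bounds of Lemma~\ref{l:K-for-T} along a cumulative flow and feeding $h_*=T^nh_*$ into a density-point argument, whereas the paper argues by contradiction on the dual side, showing that the $T^{*n}$ are integral operators whose summed kernel is positive at $(x,y)$ for every interior $x$ and \emph{almost every} $y$, and then integrating against the set $A=\{h_*=0\}$. That difference is not cosmetic, and it is exactly where your proof has a genuine gap: the perturbation step. When you perturb $s_p$ to force $G'(\pi_{s_p}z_p)\neq0$, all subsequent points $z_{p+1},\dots,z_n$ move, so the perturbed chain no longer ends at $y_*$; the final interval $V$ produced by Lemma~\ref{l:K-for-T} is a neighbourhood of the \emph{new} endpoint, not of $y_*$. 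Nor can you escape by taking the perturbation small: the radius of $V$ is of order $\delta^2$, where $\delta$ is a lower bound for $|\tfrac{d}{da}G(\pi_a z)|$ near the (perturbed) last jump, and if the original last step sits at a zero of $G'$ this bound degenerates as the perturbation tends to $0$ (if $|G'|$ is, say, exponentially flat at that zero, the guaranteed radius of $V$ shrinks faster than the endpoint displacement, so $y_*$ need never lie in $V$). This is not ``exactly as in Proposition~\ref{prop:K}'': there the times $t_1^0,t_2^0$ are free and no endpoint is prescribed. What your argument actually delivers is that arbitrarily close to every interior point there are open sets on which $h_*>0$ a.e.; that is strictly weaker than $h_*>0$ a.e., since $h_*$ could still vanish on a nowhere dense set of positive measure.

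The gap can be closed, but it needs an extra idea rather than bookkeeping. Since $G$ is $C^1$, Sard's theorem makes the set of critical values $G(\{G'=0\})$ Lebesgue-null, so it suffices to treat those $y_*$ none of whose $G$-preimages is critical; for such $y_*$ the last step of a cumulative flow reaching $y_*$ automatically has $G'\neq0$ with stable constants, the intermediate steps can then be made good by small perturbations fixed from left to right (the resulting endpoint displacement is beaten by the now nondegenerate radius of $V$), and a Lindel\"of/countable-cover argument upgrades ``$h_*>0$ a.e.\ near almost every interior $y_*$'' to ``$h_*>0$ a.e.'' Alternatively, adopt the paper's route, which never targets an exact endpoint: if $h_*=0$ on a set $A$ with $|A|>0$, then $\langle T^{*n}\mathbf 1_A,h_*\rangle=0$ for all $n$, while the same change-of-variables computation plus Lemma~\ref{l:com-flow}(b) shows $\sum_n k_n(x,y)>0$ for all interior $x$ and a.e.\ $y$; integrating the kernel over the positive-measure set $A$ then yields the contradiction, the a.e.\ exceptional set in $y$ being harmless under the integral.
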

\begin{proof}
Suppose, contrary to our claim, that there exists a measurable set $A$  such that $|A|>0$ and
$h_*(x)=0$ for $x\in A$. Then 
\[
\langle T^{*n}\mathbf 1_A,h_*\rangle=\langle \mathbf 1_A,T^nh_*\rangle=\langle \mathbf 1_A,h_*\rangle=0.
\]
We show that 
$L_A(x)=\sum_{n=0}^{\infty} T^{*n}\mathbf 1_A(x)>0$
for each interior point $x$ of $\mathbf I$, which leads to 
a contradiction and completes the proof.
Substituting $y=G(\pi_ax_b)$ into  \eqref{def-T*}
we check that $T^*$ is an integral operator, i.e. there exists
a measurable function $k_1\colon \mathbf I\times \mathbf I\to [0,\infty]$ such that
$T^*f(x)=\int_{\mathbf I} k_1(x,y)f(y)\,dy$. 

Observe that if 
$q(x,a_0)>0$, $G'(\pi_{a_0} x)\ne 0$  and 
$y=G(\pi_{a_0} x)$, then 
$k_1(x,y)>0$. Indeed, there exist an open interval $\Delta_1$ and $\varepsilon>0$ such that $a_0\in \Delta_1$, $q(x,a)\ge \varepsilon$ 
and $G'(\pi_ax)\ne 0$ 
for $a\in \Delta_1$.
Let $f\in L^{\infty}(\mathbf I)$ and $f\ge 0$. Then   
\[
T^*f(x)\ge \int_{\Delta_1}q(x,a)f(G(\pi_ax))\,da
\ge \varepsilon \int_{\Delta_1}f(G(\pi_ax)) \,da.
\]
Let $\Delta_2=\{G(\pi_{a}x)\colon a\in \Delta_1\}$.
We substitute $r=G(\pi_ax)$ to the last integral. 
Then
\[
dr=G'(G^{-1}(r))F(G^{-1}(r))\,da
\]
and we obtain 
\[
T^*f(x)\ge  \int_{\Delta_2}\frac{\varepsilon f(r)}{|G'(G^{-1}(r))F(G^{-1}(r))|}  \,dr.
\]
Since $y\in \Delta_2$ we have 
\[
k_1(x,y)\ge \frac{\varepsilon }{|G'(G^{-1}(y))F(G^{-1}(y))|}>0.
\]
Thus $k_1(x,y)>0$ 
for almost every point $y$ connected to $x$ in one step using the cumulative  flow.
Analogously the operator $T^{*n}$ is an integral operator with some kernel $k_n(x,y)$. We have $k_n(x,y)>0$ 
for almost every point $y$ connected to $x$ in $n$ steps.
Let $k(x,y)=\sum_{n=1}^{\infty}k_n(x,y)$ then according to condition (b) of Lemma~\ref{l:com-flow} 
we have $k(x,y)>0$ for almost every point $y$.
Thus $L_A(x)=\int_A k(x,y)\,dy>0$ for each interior point $x$ of $\mathbf I$.
\end{proof}

\section{Asymptotic stability}
\label{s:as-stab}
As we argued  in Section~\ref{s:gen-asympt} the following result holds.
\begin{theorem}
\label{t:as-stab-bounded}
Assume {\rm (A1)--(A5), (B1)--(B2)} and that $\mathbf I$ is a bounded set.
Then the semigroup $\{P(t)\}_{t\ge 0}$ is asymptotically stable.
\end{theorem}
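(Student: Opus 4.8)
The plan is to assemble the lemmas of the preceding two sections and then invoke Theorem~\ref{asym-th2}. By Lemma~\ref{l:inv-dens-P-T}, the existence and uniqueness of an invariant density for $\{P(t)\}_{t\ge 0}$ is equivalent to the existence and uniqueness of an invariant density for the stochastic operator $T$ on $L^1(\mathbf I)$, with the correspondence $f_*(x_b,a)=h_*(x_b)\Phi(x_b,a)$. So I would first establish that $T$ has an invariant density, then that it is unique, and finally transfer strict positivity back to the semigroup on $X$.

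For \emph{existence} I would apply the Foguel alternative to the iterates of $T$. By Lemma~\ref{l:K-for-T} the operator $T$ satisfies condition (K), so Theorem~\ref{col-sw} gives a dichotomy: either $T$ has an invariant density, or $T$ is sweeping from compact sets. Here the hypothesis that $\mathbf I$ is bounded is decisive. Since $\mathbf I$ is by construction the closure of a union of intervals it is closed, and being bounded as well it is compact. But for any density $h$ on $\mathbf I$ and every $n$ we have $\int_{\mathbf I}T^nh(x_b)\,dx_b=\|T^nh\|_{L^1(\mathbf I)}=1$ because $T$ is stochastic, which is incompatible with sweeping from the compact set $\mathbf I$. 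Hence the sweeping branch is excluded and $T$ must have an invariant density $h_*$.

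For \emph{uniqueness} I would use Lemma~\ref{l:uniq-inv-dens}: every invariant density of $T$ is strictly positive a.e. As observed in the remark following Theorem~\ref{asym-th2}, this forces the invariant density to be unique, since two distinct invariant densities would produce, via the normalized positive part of their difference, a further invariant density vanishing on a set of positive measure. Returning to the semigroup, the unique invariant density of $\{P(t)\}_{t\ge 0}$ is $f_*(x_b,a)=h_*(x_b)\Phi(x_b,a)$; as $h_*>0$ a.e. on $\mathbf I$ and $\Phi(x_b,a)>0$ on $X=\mathbf I\times[0,\infty)$ (under (B1) we have $\ga(x_b)=\infty$, so $\int_a^\infty q(x_b,r)\,dr>0$ for every finite $a$), it follows that $f_*>0$ a.e. on $X$. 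Since by Proposition~\ref{prop:K} the semigroup satisfies condition (K) and is therefore partially integral, Theorem~\ref{asym-th2} applies and yields asymptotic stability.

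I do not expect a genuine technical obstacle here, because almost all of the work has already been done in the preceding lemmas (condition (K) for both $\{P(t)\}$ and $T$, the reduction to $T$, and positivity of invariant densities of $T$). The one new observation specific to this theorem, and the conceptual heart of the argument, is that compactness of $\mathbf I$ is incompatible with sweeping from compact sets; this removes the sweeping alternative and thereby converts the general Foguel dichotomy into the required existence statement. The remaining care is merely bookkeeping: verifying that $\mathbf I$ closed and bounded is compact, and checking positivity of $\Phi$ so that positivity of $h_*$ propagates to $f_*$ on all of $X$.
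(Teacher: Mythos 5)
Your proposal is correct and follows essentially the same route as the paper: the paper's proof of Theorem~\ref{t:as-stab-bounded} is precisely the scheme outlined in Section~\ref{s:gen-asympt} --- reduce to the operator $T$ via Lemma~\ref{l:inv-dens-P-T}, use condition (K) for $T$ (Lemma~\ref{l:K-for-T}) together with Theorem~\ref{col-sw} to conclude that $T$, which cannot sweep from the compact set $\mathbf I$, has an invariant density, then invoke positivity (Lemma~\ref{l:uniq-inv-dens}), condition (K) for the semigroup (Proposition~\ref{prop:K}), and Theorem~\ref{asym-th2}. Your added bookkeeping (compactness of $\mathbf I$, positivity of $\Phi$ under (B1)) matches the paper's intent exactly.
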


We now consider the case when $\mathbf I$ is an unbounded set. Then $\mathbf I=[x_{\rm min},\infty)$.
The immune status is roughly proportional to the concentration of antibodies  
and their degradation rate is almost constant. It means that we can assume that 
\vskip1mm
\noindent (C1)  $\,\,\lim\limits_{x\to\infty} F(x)=-\infty$. 
\vskip1mm
It is reasonable to  assume that 
the increase of the concentration of antibodies after the infection
is bounded, i.e. 
\vskip1mm
\noindent (C2)  $\,\,G(x)\le x+L$ for some $L>0$ and all $x\in \mathbf I$. 

\begin{lemma}
\label{l:Hasm}
Under the above assumptions there exists 
$R>x_{\rm min}$  such that
\begin{equation}
\limsup_{n\to\infty} \int_{x_{\rm min}}^R T^nf(x_b)\,dx_b\ge 1/3\quad \textrm{for $f\in D$},
\label{Hasm}
\end{equation}
where $D$ stands for the set of densities in $L^1(\mathbf I)$.
\end{lemma}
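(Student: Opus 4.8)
The plan is to prove \eqref{Hasm} by a Khasminskii-type (Lyapunov) argument: I would exhibit a nonnegative function $V$ on $\mathbf I$ for which the adjoint operator $T^*$ has a strict mean decrease outside a bounded set, and then convert this drift inequality into the desired lower bound on the recurrent mass in $[x_{\rm min},R]$. The natural choice here is the linear function $V(x_b)=x_b$. The goal of the first part is to produce constants $c_1,c_2>0$ and a radius $R>x_{\rm min}$ such that
\begin{equation*}
T^*V(x_b)\le V(x_b)-c_1+c_2\mathbf 1_{[x_{\rm min},R]}(x_b)\qquad\text{for all }x_b\in\mathbf I .
\end{equation*}

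For the drift inequality I would start from formula \eqref{def-T*}, which gives $T^*V(x_b)=\int_0^\infty q(x_b,a)\,G(\pi_a x_b)\,da$. Using (C2) and the fact that the flow is decreasing, $\pi_a x_b\le x_b$, together with $\int_0^\infty q(x_b,a)\,da=1$, one obtains $T^*V(x_b)\le V(x_b)+L-J(x_b)$, where
\begin{equation*}
J(x_b)=\int_0^\infty q(x_b,a)\Big(\int_0^a|F(\pi_s x_b)|\,ds\Big)da=\int_0^\infty|F(\pi_s x_b)|\,\Phi(x_b,s)\,ds
\end{equation*}
(the second equality by Fubini). In particular $T^*V\le V+L$ everywhere. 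It then remains to show $J(x_b)\to\infty$ as $x_b\to\infty$. Here (A5) gives $\Phi(x_b,s)\ge\Phi(x_b,\varepsilon)>\varepsilon$ for $s\le\varepsilon$, hence $J(x_b)\ge\varepsilon\big(x_b-\pi_\varepsilon x_b\big)$. To see that $x_b-\pi_\varepsilon x_b\to\infty$ I would split into two cases: if $\pi_\varepsilon x_b\ge x_b/2$ then, since the flow is decreasing, $|F|$ is evaluated on $[0,\varepsilon]$ only at points $\ge x_b/2$, and (C1) forces $\inf_{y\ge x_b/2}|F(y)|\to\infty$, so $x_b-\pi_\varepsilon x_b=\int_0^\varepsilon|F(\pi_s x_b)|\,ds\ge\varepsilon\inf_{y\ge x_b/2}|F(y)|$ is large; otherwise $x_b-\pi_\varepsilon x_b>x_b/2$ is large directly. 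Consequently there is $R$ with $J(x_b)>L+c_1$ for $x_b\ge R$, which yields the displayed drift inequality with $c_2=c_1+L$.

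Given the drift inequality, set $\mu_n=\langle V,T^nf\rangle$ and $p_n=\int_{x_{\rm min}}^R T^nf(x_b)\,dx_b$. Testing the inequality against the density $T^nf$ (justified by Tonelli, using $\langle T^*V,T^nf\rangle=\langle V,T^{n+1}f\rangle$ and $\int_{\mathbf I}T^nf\,dx_b=1$) gives $\mu_{n+1}\le\mu_n-c_1+c_2p_n$. Telescoping and using $\mu_N\ge 0$ leads to $c_1-c_2\cdot\frac1N\sum_{n=0}^{N-1}p_n\le\mu_0/N$, so letting $N\to\infty$ yields $\limsup_n p_n\ge c_1/c_2=c_1/(c_1+L)$. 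Choosing $c_1\ge L/2$ makes this ratio at least $1/3$, which is \eqref{Hasm}.

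The main obstacle is that this last step presupposes $\mu_0=\int_{\mathbf I} x_b f(x_b)\,dx_b<\infty$, which can fail for a general density. I would circumvent it by first proving the conclusion for densities of bounded support (for which $\mu_n<\infty$ for all $n$, by induction from $T^*V\le V+L$), and then passing to an arbitrary $f\in D$ by truncation: approximating $f$ by $f_K=f\mathbf 1_{[x_{\rm min},K]}/\|f\mathbf 1_{[x_{\rm min},K]}\|$ in $L^1(\mathbf I)$ and using that each $T^n$ is an $L^1$-contraction, so that $|p_n(f)-p_n(f_K)|\le\|f-f_K\|$ uniformly in $n$; letting $K\to\infty$ recovers \eqref{Hasm} for $f$ with the same $R$. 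The genuinely substantive point is the drift estimate, i.e.\ establishing $J(x_b)\to\infty$ from (C1) and (A5); the truncation is only a technical device to handle the possibly infinite first moment.
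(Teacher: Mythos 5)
Your proposal is correct and follows essentially the same route as the paper: a Foster--Lyapunov drift argument with $V(x_b)=x_b$, using (C2) to get the $+L$ term, (A5) together with (C1) to force a strict decrease of the first moment for $x_b\ge R$, and an approximation by densities with finite first moment (the paper uses density of $D_1$ in $D$, you use truncation) to handle general $f\in D$. The only differences are cosmetic --- you telescope to a Ces\`aro bound where the paper argues by contradiction from the eventual negativity of the first moment --- and your two-case argument showing $x_b-\pi_\varepsilon x_b\to\infty$ actually fills in a step the paper asserts without justification.
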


 \begin{proof}
Denote by $D_1$ the set of all densities with finite first moment, i.e. such that $\int_{\mathbf I} x_bf(x_b)\,dx_b<\infty$.
 Let $f\in D_1$ and $V(x_b)=x_b$. Then
 \begin{align*}
 \int_{\mathbf I} V(x_b)Tf(x_b)\,dx_b &=\int_{\mathbf I}  f(x_b)T^*V(x_b)\,dx_b\\
 &=\int_{\mathbf I} f(x_b)
\int_0^{\infty}q(x_b,a)G(\pi_ax_b)\,da\,dx_b.
  \end{align*}
As $G(x_b)\le x_b+L$ and $\int_0^{\infty}q(x_b,a)\,da=1$ we have
\[
 \int_{\mathbf I} V(x_b)Tf(x_b)\,dx_b\le 
 \int_{\mathbf I} f(x_b)
\int_0^{\infty}q(x_b,a)\pi_ax_b\,da\,dx_b+L.
  \]
Let $\varepsilon>0$ be a constant from condition (A5). Then   
\begin{align*}
\int_0^{\infty}q(x_b,a)\pi_ax_b\,da&\le \int_0^{\varepsilon}q(x_b,a)x_b\,da+\int_{\varepsilon}^{\infty}q(x_b,a)\pi_{\varepsilon}x_b\,da \\
&=x_b+(\pi_{\varepsilon}x_b-x_b)\int_{\varepsilon}^{\infty}q(x_b,a)\,da\\
&\le x_b+\varepsilon(\pi_{\varepsilon}x_b-x_b).
\end{align*}  
Since
$\lim_{x\to\infty} F(x)=-\infty$, there exists $R>0$ such that 
$\pi_{\varepsilon}x_b-x_b\le -3L/\varepsilon$ for $x_b\ge R$, and consequently 
\[
\int_0^{\infty}q(x_b,a)\pi_ax_b\,da\le 
\begin{cases}
x_b&\textrm{for $x_b <R$},\\
x_b-3L&\textrm{for $x_b\ge R$}.
\end{cases}
\]
Thus 
\[
 \int_{\mathbf I} V(x_b)Tf(x_b)\,dx_b\le 
 \int_{\mathbf I} V(x_b)f(x_b)\,dx_b+L
 -3L\int_R^{\infty} f(x_b)\,dx_b.
\]
From this inequality it follows that $T^nf\in D_1$ 
and 
\[
 \int_{\mathbf I} V(x_b)T^{n+1}f(x_b)\,dx_b\le 
 \int_{\mathbf I} V(x_b)T^nf(x_b)\,dx_b+L
 -3L\int_R^{\infty} T^nf(x_b)\,dx_b
\]
for $n\ge 0$.
Suppose contrary to our claim that \eqref{Hasm} does not hold. Then 
there exists $n_0=n_0(f)$ such that 
\[
\int_R^{\infty} T^nf(x_b)\,dx_b\ge 2/3
\] 
for $n\ge n_0$. In consequence  
\[
\int_{\mathbf I} V(x_b)T^{n+1}f(x_b)\,dx_b\le 
 \int_{\mathbf I} V(x_b)T^nf(x_b)\,dx_b-L
\]
which implies that the integral 
$\int_{\mathbf I} V(x_b)T^nf(x_b)\,dx_b$ is negative for sufficiently
large $n$, which is impossible. Since the set $D_1$ is dense in $D$,
\eqref{Hasm} holds for $f\in D$.
\end{proof}

\begin{theorem}
\label{t:as-stab-unbounded}
Assume {\rm (A1)--(A5), (B1)--(B2)} and {\rm  (C1)--(C2)} in the case when $\mathbf I$ is an unbounded set.
Then the semigroup $\{P(t)\}_{t\ge 0}$ is asymptotically stable.
\end{theorem}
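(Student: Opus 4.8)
The plan is to apply Theorem~\ref{asym-th2}, so I must verify that $\{P(t)\}_{t\ge 0}$ is partially integral and possesses a unique invariant density that is positive almost everywhere. Partial integrality is already secured: by Proposition~\ref{prop:K} the semigroup satisfies condition (K), and condition (K) at even a single point forces partial integrality. It therefore remains to produce a unique, a.e.\ positive invariant density, and for this I would pass to the auxiliary stochastic operator $T$ on $L^1(\mathbf I)$. By Lemma~\ref{l:inv-dens-P-T} the existence and uniqueness of an invariant density for $\{P(t)\}_{t\ge 0}$ is equivalent to the existence and uniqueness of an invariant density for $T$, with the correspondence $f_*(x_b,a)=h_*(x_b)\Phi(x_b,a)$. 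Thus the whole theorem reduces to an analysis of $T$.

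Next I would invoke the Foguel alternative for $T$. By Lemma~\ref{l:K-for-T} the operator $T$ satisfies condition (K), so Theorem~\ref{col-sw} applies: either $T$ has an invariant density, or $T$ is sweeping from every compact subset of $\mathbf I$. The crux of the unbounded case is to exclude the sweeping branch, and this is exactly the content of the Lyapunov-type estimate in Lemma~\ref{l:Hasm}. Since $\mathbf I=[x_{\rm min},\infty)$, the interval $[x_{\rm min},R]$ furnished by that lemma is compact, yet \eqref{Hasm} gives
\[
\limsup_{n\to\infty}\int_{x_{\rm min}}^{R} T^n f(x_b)\,dx_b\ge \tfrac13\qquad\text{for every }f\in D,
\]
which is incompatible with the relation $\lim_{n\to\infty}\int_{x_{\rm min}}^{R}T^nf\,dx_b=0$ that sweeping from the compact set $[x_{\rm min},R]$ would force. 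Hence $T$ cannot be sweeping from compact sets, and by the dichotomy $T$ must have an invariant density.

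Finally I would upgrade existence to uniqueness and positivity. Lemma~\ref{l:uniq-inv-dens} shows that any invariant density of $T$ is strictly positive a.e.; by the observation recorded after Theorem~\ref{asym-th2}---that when every invariant density is positive an invariant density is necessarily unique---the invariant density of $T$ is unique. Transporting this back through Lemma~\ref{l:inv-dens-P-T} yields a unique invariant density $f_*(x_b,a)=h_*(x_b)\Phi(x_b,a)$ for $\{P(t)\}_{t\ge 0}$, and since $h_*>0$ a.e.\ on $\mathbf I$ and $\Phi(x_b,a)>0$, we obtain $f_*>0$ a.e.\ on $X$. With partial integrality, uniqueness, and positivity all in hand, Theorem~\ref{asym-th2} delivers the asymptotic stability of $\{P(t)\}_{t\ge 0}$.

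I expect the genuine difficulty to lie not in this synthesis but in the a priori moment estimate underlying Lemma~\ref{l:Hasm}: assumptions (C1)--(C2) must be combined so that the bounded boosting gain $G(x)-x\le L$ is eventually overwhelmed by the waning $\pi_{\varepsilon}x_b-x_b\to-\infty$, producing a strict drift of the mean $\int_{\mathbf I}x_b\,T^nf(x_b)\,dx_b$ toward a bounded region. Once that drift is established, the remainder is a formal application of the Foguel alternative together with the structural lemmas on $T$.
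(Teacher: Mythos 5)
Your proposal is correct and follows essentially the same route as the paper: Lemma~\ref{l:Hasm} rules out sweeping of $T$ from compact sets, condition (K) for $T$ (Lemma~\ref{l:K-for-T}) plus Theorem~\ref{col-sw} then yields an invariant density, Lemma~\ref{l:uniq-inv-dens} gives its positivity and hence uniqueness, and transferring back through Lemma~\ref{l:inv-dens-P-T} together with Proposition~\ref{prop:K} lets Theorem~\ref{asym-th2} conclude asymptotic stability. Your write-up merely makes explicit two points the paper leaves implicit, namely the positivity of $f_*=h_*\Phi$ via $\Phi>0$ and the incompatibility of \eqref{Hasm} with sweeping from $[x_{\rm min},R]$.
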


\begin{proof}
According to Lemma~\ref{l:Hasm} the operator $T$ is not sweeping from compact sets. 
Since the operator $T$ satisfies condition (K), Theorem~\ref{col-sw} implies that the operator $T$ has an invariant density $h_*$.
From Lemma~\ref{l:uniq-inv-dens} it follows that $h_*>0$ and $h_*$ is a unique invariant density with respect to $T$. 
Thus the semigroup $\{P(t)\}_{t\ge 0}$ has a unique and positive invariant density. 
Since this semigroup also satisfies condition (K) (see Proposition~\ref{prop:K}), Theorem~\ref{asym-th2} implies that 
$\{P(t)\}_{t\ge 0}$ is asymptotically stable.
\end{proof}

\section{Sweeping}
\label{s:sweeping}
We now give an example of an operator $T$ that does not have invariant densities, which
implies sweeping of the semigroup 
$\{P(t)\}_{t\ge 0}$. 

\begin{lemma}
\label{l:sweeping}
Assume that  for some constants $b,c,\gamma>0$ and $L<1$ we have
$F(x)\ge -cx$, $G(x)\ge bx$ and 
\begin{equation}
\label{c:sweep-T}
\sup\limits_{x\in\mathbf I}\int_0^{\infty}q(x,a)b^{-\gamma}  e^{c\gamma a} da<L. 
\end{equation}
Then the operator $T$ has no invariant density.
\end{lemma}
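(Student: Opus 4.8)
The plan is to exhibit a Lyapunov-type test function that is strictly contracted by the dual operator $T^*$, and thereby force any candidate invariant density to be trivial. Concretely, I would take $V(x)=x^{-\gamma}$ and aim at the pointwise estimate $T^*V\le LV$. A preliminary point that is used throughout is that $V$ is \emph{bounded} on $\mathbf I$. Since $G$ is continuous and $G(x)>x\ge 0$ with $G\colon[0,\infty)\to(0,\infty)$, one has $G_{\min}:=\inf_{x\ge0}G(x)>0$ (the minimum over $[0,1]$ is positive by compactness, while $G(x)>x\ge 1$ for $x\ge 1$). Every element of each $I_n$, and hence of $\mathbf I=\overline{\bigcup_n I_n}$, is a value of $G$, so $\mathbf I\subseteq[G_{\min},\infty)$ and $V(x)\le G_{\min}^{-\gamma}$. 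In particular $V\in L^\infty(\mathbf I)$, so the adjoint identity $\langle V,Th\rangle=\langle T^*V,h\rangle$ is legitimate for every $h\in L^1(\mathbf I)$.

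To obtain the contraction I would first compare the flow $\pi_a x$ with the linear one. From $F(x)\ge -cx$ and $\frac{d}{da}\pi_ax=F(\pi_ax)$ one gets $\frac{d}{da}\log\pi_ax\ge -c$, hence $\pi_ax\ge x e^{-ca}$. Combining this with $G(x)\ge bx$ yields $G(\pi_ax)\ge b\,\pi_ax\ge bx\,e^{-ca}$, and since $t\mapsto t^{-\gamma}$ is decreasing,
\[
V(G(\pi_ax))=\big(G(\pi_ax)\big)^{-\gamma}\le b^{-\gamma}x^{-\gamma}e^{c\gamma a}.
\]
Substituting into the formula \eqref{def-T*} for $T^*$ and invoking assumption \eqref{c:sweep-T},
\[
T^*V(x)=\int_0^\infty q(x,a)\,\big(G(\pi_ax)\big)^{-\gamma}\,da
\le x^{-\gamma}\int_0^\infty q(x,a)\,b^{-\gamma}e^{c\gamma a}\,da\le L\,V(x).
\]

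Finally I would argue by contradiction. Suppose $h_*$ is an invariant density of $T$, so $Th_*=h_*$, $h_*\ge 0$, and $\|h_*\|_{L^1(\mathbf I)}=1$. Pairing the contraction estimate against $h_*$ and using invariance together with the adjoint identity,
\[
\langle V,h_*\rangle=\langle V,Th_*\rangle=\langle T^*V,h_*\rangle\le L\,\langle V,h_*\rangle.
\]
Because $V$ is bounded on $\mathbf I$ we have $\langle V,h_*\rangle<\infty$, and because $V>0$ while $h_*\ge0$ is a nonzero density we have $\langle V,h_*\rangle>0$; hence $\langle V,h_*\rangle\le L\langle V,h_*\rangle<\langle V,h_*\rangle$, which is absurd. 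Therefore $T$ has no invariant density. The only genuinely delicate ingredients are the boundedness of $V$ (equivalently $\inf\mathbf I>0$), which simultaneously guarantees finiteness of the pairing and validity of the adjoint identity, and the flow comparison $\pi_ax\ge xe^{-ca}$; once these are secured the conclusion is a one-line Foster--Lyapunov contradiction, so I do not anticipate a serious obstacle.
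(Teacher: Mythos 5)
Your proof is correct and follows essentially the same route as the paper: the Lyapunov function $V(x)=x^{-\gamma}$, the flow comparison $\pi_ax\ge xe^{-ca}$, the contraction $T^*V\le LV$ obtained from \eqref{c:sweep-T}, and a contradiction from pairing against a putative invariant density. The only differences are cosmetic --- the paper iterates to get $T^{*n}V\le L^nV\to 0$ while you use a single application of the contraction, and you make explicit the finiteness and positivity of $\langle V,h_*\rangle$ (via $\inf\mathbf I\ge\inf G>0$), a point the paper leaves implicit.
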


\begin{proof}
Let $V(x)=x^{-\gamma}$.
Since $\pi_ax\ge e^{-ca}x$ and $G(x)\ge bx$, we have
\begin{align*}
T^*V(x)&=\int_0^{\infty} q(x,a)(G(\pi_ax))^{-\gamma}\,da\le \int_0^{\infty} q(x,a)b^{-\gamma} e^{c\gamma a}x^{-\gamma}\,da\\
&\le Lx^{-\gamma}=LV(x).
\end{align*}
Hence $T^{*n}V(x)\le L^n V(x)$ for each $n\ge 1$.
Suppose, on the contrary,  that $T$ has an invariant density $h_*$. 
From the last inequality we obtain 
\begin{align*}
\int_{x_{\rm min}}^{\infty} V(x)h_*(x)\,dx&= 
\int_{x_{\rm min}}^{\infty} V(x)T^nh_*(x)\,dx
=\int_{x_{\rm min}}^{\infty} T^{*n}V(x)h_*(x)\,dx\\
&\le L^n \int_{x_{\rm min}}^{\infty} V(x)h_*(x)\,dx\to 0
\end{align*}
as $n\to\infty$, which shows that $h_*$ is not a density.
\end{proof}
For example if $q(x,a)=e^{-a}$ and $b>e^c$, then inequality  \eqref{c:sweep-T} holds with $\gamma=c^{-1}- (\log b)^{-1}$.

Assume (A1)--(A5), (B1)--(B2) and that the assumptions of Lemma~\ref{l:sweeping} hold.
Then  according to Lemma~\ref{l:inv-dens-P-T} the semigroup $\{P(t)\}_{t\ge 0}$ has no invariant density.
Since  the semigroup $\{P(t)\}_{t\ge 0}$ satisfies condition  (K) (see  Proposition~\ref{prop:K}), 
according to Theorem~\ref{col-sw} this semigroup is sweeping from compact subsets of $X=\mathbf I\times [0,\infty)$. 

\begin{remark}
\label{strong-sweeping}
We suppose that if  the semigroup $\{P(t)\}_{t\ge 0}$ has no invariant density, then it satisfies the following stronger sweeping condition:
\begin{equation}
\label{swep-semi}
\lim_{t\to \infty}  \int_0^{\infty}\int_{x_{\rm min}}^R P(t)f(x_b,a)\,dx_b\,da=0
\end{equation}
for each $R>x_{\rm min}$  and for each density $f$, i.e. the sweeping from all sets of the form 
$[x_{\rm min},R]\times [0,\infty)$.
Notably, this property can be interpreted as asymptotic permanent immunity of the population. 

We sketch an informal justification of \eqref{swep-semi}.
Since $T^{*n}$ are integral operators with kernels  $k_n(x,y)$ satisfying condition 
$\sum_{n=1}^{\infty}k_n(x,y)>0$,  
the operator $T$ is a conservative or dissipative (see~\cite{Fo}). 
If $T$ is a conservative or dissipative operator and satisfies condition (K), 
then there exists 
a measurable function $h_*>0$  such that $Th_*\le h_*$
and $\int_C h_*(x_b)\,dx_b<\infty$ for every compact set $C$
(see Lemma 8 \cite{PR-JMMA2016}).
Here $Th_*$ denotes pointwise limit of the sequence $Th_n$, where $(h_n)$ is any monotonic sequence of nonnegative
functions from $L^1(\mathbf I)$ pointwise convergent to $h_*$ almost everywhere.
Then the function $f_*(x_b,a)=h_*(x_b)\Phi(x_b,a)$ is subinvariant with 
respect to the semigroup $\{P(t)\}_{t\ge 0}$, i.e. $P(t)f_*\le f_*$ for all $t\ge 0$. Moreover  
\[
\int_0^{\infty} \int_C f_*(x_b,a)\,dx_b\,da<\infty
\]
for each compact set $C$. Since the semigroup $\{P(t)\}_{t\ge 0}$ has no invariant density, 
this semigroup is sweeping from the sets 
of the form $[x_{\rm min},R]\times [0,\infty)$ (see \cite{R-b95}, Corollary 3), 
i.e. this semigroup satisfies condition  \eqref{swep-semi}. 
\end{remark}

\section{Asymptotic distribution of immune status}
%\section{Asymptotic distribution of the process $\xi_t$}
\label{s:as-one-dim}
Using the results concerning the long-term behaviour of the distributions of the 
process $(\eta_t)_{t\ge 0}$ we can determine asymptotic properties of the
process $(\xi_t)_{t\ge 0}$. We start with the process $\zeta_t=(\xi_t,a_t)$, 
where $a_t=t-t_n$ is the time since the last infection.

Let $u(t,x_b,a)$ be the density of the process $(\eta_t)_{t\ge 0}$.
Then the distribution of the process $(\zeta_t)_{t\ge 0}$ is given by a
density $v(t,x,a)$ and we have the following relation between these densities: 
\begin{equation}
\label{relacja-u-w}
\int_{x_{\rm min}}^{x_b} u(t,r,a)\,dr=\int_0^{\pi_ax_b} v(t,r,a)\,dr.
\end{equation}
Differentiating both sides of (\ref{relacja-u-w}) with respect to $x_b$ we obtain  
\begin{equation}
\label{relacja-u-w2}
u(t,x_b,a)=\frac{\partial (\pi_ax_b)}{\partial x_b} v(t,\pi_a x_b,a)
 =\frac{F(\pi_ax_b)}{F(x_b)} v(t,\pi_a x_b,a).
\end{equation}
Let $u(t)(x_b,a)=u(t,x_b,a)$ and $v(t)(x,a)=v(t,x,a)$.
If the semigroup $\{P(t)\}_{t\ge 0}$ is asymptotically stable
then $\lim_{t\to \infty}\|u(t)-f_*\|=0$ in $L^1(X)$ and from
\eqref{relacja-u-w2} it follows that
\begin{equation}
\label{relacja-u-w3}
\lim_{t\to \infty}\int_X \bigg|
\frac{F(\pi_ax_b)}{F(x_b)} v(t,\pi_a x_b,a)-f_*(x_b,a)\bigg|\,dx_b\,da=0.
\end{equation}
Let 
\[
g_*(x,a)=\frac{F(\pi_{-a}x)}{F(x)} f_*(\pi_{-a} x,a)\quad \textrm{for $x\in \pi_a(\mathbf I)$}.
\]
Substituting $x=\pi_ax_b$ in \eqref{relacja-u-w3} we obtain 
\begin{equation}
\label{relacja-u-w4}
\lim_{t\to \infty}\int_0^{\infty} \int_{\pi_a(\mathbf I)} \bigg|
 v(t,x,a)-g_*(x,a)\bigg|\,dx\,da=0.
\end{equation}
Thus $\lim_{t\to \infty}\|v(t)-g_*\|=0$ in $L^1(\widetilde X)$,
where 
\[
\widetilde X=\{(x,a)\colon a\in[0,\infty),\,\, x\in \pi_a(\mathbf I)\}
\]
and $g_*$ is a unique stationary density of the process $(\zeta_t)_{t\ge 0}$ with values in $\widetilde X$.   
Moreover, from  \eqref{relacja-u-w4} it follows that the densities $f_t(x)$ of the distributions of the process $(\xi_t)_{t\ge 0}$ converge to 
$\tilde g_*(x)=\int_0^{\infty} g_*(x,a)\,da$ in the space $L^1[0,x_{\rm max})$, where $x_{\rm max}=\sup \mathbf I$.
 
\section{Examples and final remarks}
\label{s:examples}
We now consider  versions of the model with specific functions $F$, $G$ and $q$. These are models in which: 
immunity decreases exponentially; with constant increase of antibodies after infection; 
with re-infection dependent on the concentration of antibodies; and with seasonal infections.

\begin{example}[Exponential decay model]
\label{ex:F}
We consider a model with constant degradation rate of antibodies.
In such a model $F(x)=-cx$, for some $c>0$, i.e. 
the decrease of antibodies is proportional to their total number.
In this case $P_{\pi_a}f(x)=f(e^{ca}x)e^{ca}$
and the operator $P_a$ is given by the formula
\[
P_af(x)=\sum_{i\in I_x} e^{ca}f(e^{ca}\varphi_i(x))|\varphi_i'(x)|,
\]
where the functions $\varphi_i$ were introduced in Section~\ref{s:model}. 
\end{example}

\begin{example}[Constant boost of the immune level]
\label{ex:G}
In such a model $G(x)=x+K$ for some $K>0$. 
Then 
\[
P_Gf(x)=\begin{cases} 
0&\text{if $x< K$},\\
f(x-K)&\text{if $x\ge K$}.
\end{cases}
\]
If we additionally assume that  $F(x)=-cx$, then
$P_af(x)=e^{ca}f(e^{ca}(x-K))$ for $x\ge K$ and 
$P_af(x)=0$ for $x<K$.
\end{example}

The risk of re-infection depends on the concentration of antibodies
and we can consider models based on this assumption.

\begin{example}[Model with a threshold concentration of antibodies]  
\label{ex:theshold}
Usually, infection can only occur when antibody level falls below a certain threshold value $x_{{\rm th}}$. Assume that $x_{{\rm th}}\le \min G(x)$
and denote by $a_{{\rm th}}(x_b)$ 
the time after which the immune status reaches the level $x_{{\rm th}}$.
Then 
\[
a_{{\rm th}}(x_b)=\int_{x_b}^{x_{{\rm th}}}\frac{dx}{F(x)}.
\]
We assume that  $\tau=a_{{\rm th}}(x_b)+\vartheta $ is the length of the period between infections,
where $\vartheta $ is a nonnegative random variable having density $h(a)$.
Then $q(x_b,a)=0$ for $a<a_{{\rm th}}(x_b)$ and 
$q(x_b,a)=h(a-a_{{\rm th}}(x_b))$ for $a\ge a_{{\rm th}}(x_b)$.
In particular if $F(x)=-cx$, then 
\[
a_{{\rm th}}(x_b)=c^{-1}\ln(x_b/x_{{\rm th}}),\quad
q(x_b,a)=h(a-c^{-1}\ln(x_b/x_{{\rm th}})).
\]
If $m_{\vartheta}$ and  $\sigma_{\vartheta}$ are the expected value and the standard deviation  of  $\vartheta$,  
then a random variable $\tau$ with density $a\mapsto q(x_b,a)$ has
the expected value $M(x_b)= m_{\vartheta}+a_{{\rm th}}(x_b)$ and the standard deviation $\sigma_{\vartheta}$. 
\end{example}

\begin{example}[Re-infection dependent on $x^{-k}(a)$]  
\label{ex:concentr-antib}
We now consider a model in which probability of re-infection depends on the concentration of antibodies.
We recall that the rate of re-infection at time $a$ is given by
$p(x_b,a)$. Let $x(a)$ be the immune status
after the time $a$ that has elapsed since infection 
and assume that 
there exist positive constants $\kappa$ and $k$ such that
$p(x_b,a)=\kappa x^{-k}(a)$.
Since 
\[
q(x_b,a)=p(x_b,a)\exp\big(-\textstyle{\int_0^a} p(x_b,r)\,dr\big)
\]
we have
\[
q(x_b,a)=\kappa x^{-k}(a)\exp\big(-\textstyle{\int_0^a} \kappa x^{-k}(r)\,dr\big).
\]
In particular if $F(x)=-cx$, then $x(a)=x_be^{-ca}$ and we obtain
\[
q(x_b,a)=\kappa x_b^{-k}e^{cka}\exp\bigg(\frac{\kappa x_b^{-k}}{ck}\big(1-e^{cka}\big) \bigg)
\]
and 
\[
\Phi(x_b,a)=\exp\bigg(\frac{\kappa x_b^{-k}}{ck}\big(1-e^{cka}\big) \bigg).
\]
%Let $M_n(x_b)$ be the $n$-th moment of $q(x_b,a)$, i.e. 
%\[
%M_n(x_b)=\int_0^{\infty}a^n q(x_b,a)\,da.
%\]
%Then 
%\[
%M_n(x_b)=\frac{Ke^K}{(ck)^n} \int_0^{\infty} y^n e^y e^{-Ke^y}\,dy=\frac{ne^K}{(ck)^n} \int_0^{\infty} y^{n-1}  e^{-Ke^y}\,dy,
%\]
%where $K=\kappa x_b^{-k}/(ck)$.
The graph of the function $a\to q(x_b,a)$ for some values of $c,k,\kappa,x_b$ is presented in Fig.~\ref{r:im-1}.
\begin{figure}
\centerline{
\begin{picture}(350,140)(-5,-10)
\put(0,0){\includegraphics{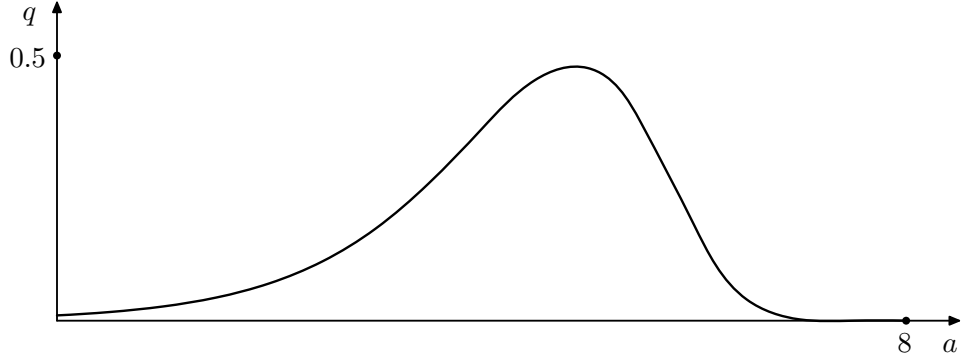}}
\put(337,-10){$a$}
\put(-11,116){$q$}
\put(320,-10){$8$}
\put(-16,98){$0.5$}
\end{picture}
}
\caption{Graph of the function $a\mapsto q(x_b,a)$ for $ck=1$ and $x_b^k=100\kappa$.}
\label{r:im-1}
\end{figure}
Let $M(x_b)$ be the expected value of $q(x_b,a)$.
Then 
\[
M(x_b)=\int_0^{\infty}a q(x_b,a)\,da=\int_0^{\infty}\Phi(x_b,a)\,da.
\]
Substituting $y=Ke^{cka}$ to the last integral, where $K=\kappa x_b^{-k}/(ck)$, 
we obtain
\[
M(x_b)=
\frac{e^K}{ck} \int_K^{\infty}y^{-1}e^{-y}\,dy=\frac{e^K}{ck}E_1(K). 
\]
Here the function $E_1$ denotes the \textit{exponential integral} $E_1(x)=\int_{x}^{\infty}y^{-1}e^{-y}\,dy $.
Since $K=\kappa x_b^{-k}/(ck)=p(x_b,0)/(ck)$ and the rate of re-infection at $a=0$ should be small, we can assume that $K$ is a small number.
It is well known that $E_1(x)=-\gamma-\ln x+o(1)$ as $x\to 0$, where $\gamma\approx 0.57721$ is Euler's constant.
Thus
\[
M(x_b)\approx  -\frac{\gamma+\ln K}{ck}=\frac{\ln x_b}{c} -  \frac{\gamma+\ln(\kappa/ck)}{ck}.
\]
Observe that the formulae for
the expected value  of the length of the period between infections $M(x_b)$
in the models from Examples~\ref{ex:theshold} and~\ref{ex:concentr-antib} are similar.
\end{example}

Certain epidemics occur seasonally and then we should include  the epidemic course in the model.
Let $T$ be the average time between their successive outbreaks.
In this case we can simply assume that $q$ can depend only on $a$, and $q$ has a distribution centered around $T$, but since an infection may occur after only a few epidemic cycles, $q$ is expected to be a multimodal function  with local maxima near points 
$T, 2T,3T,\dots$. 

\begin{example}[Re-infection dependent on the epidemic course]  
\label{ex:seasonal}
We consider a model with probability of re-infection dependent on 
the course of an epidemic and the concentration of antibodies.
We assume  that the rate of re-infection
$p(x_b,a)$ is proportional to some negative power of the immune status $x(a)$ of an individual
and the number of infected people $n(a)$ at given time $a$.
Thus 
$p(x_b,a)=\kappa x^{-k}(a)n(a)$, where 
$\kappa$ and $k$ are  positive constants.
Then 
\[
q(x_b,a)=\kappa x^{-k}(a)n(a)\exp\bigg(-\int_0^a \kappa x^{-k}(r)n(r)\,dr\bigg).
\]
In particular if $F(x)=-cx$, then $x(a)=x_be^{-ca}$ and we obtain
\[
q(x_b,a)=\kappa  x_b^{-k}e^{cka}n(a)\exp\bigg(-\int_0^a \kappa  x_b^{-k}e^{ckr}n(r)\,dr \bigg).
\]
Fig.~\ref{r:im-2} illustrates the dependence of $q$ on the course of the epidemic. 
\begin{figure}
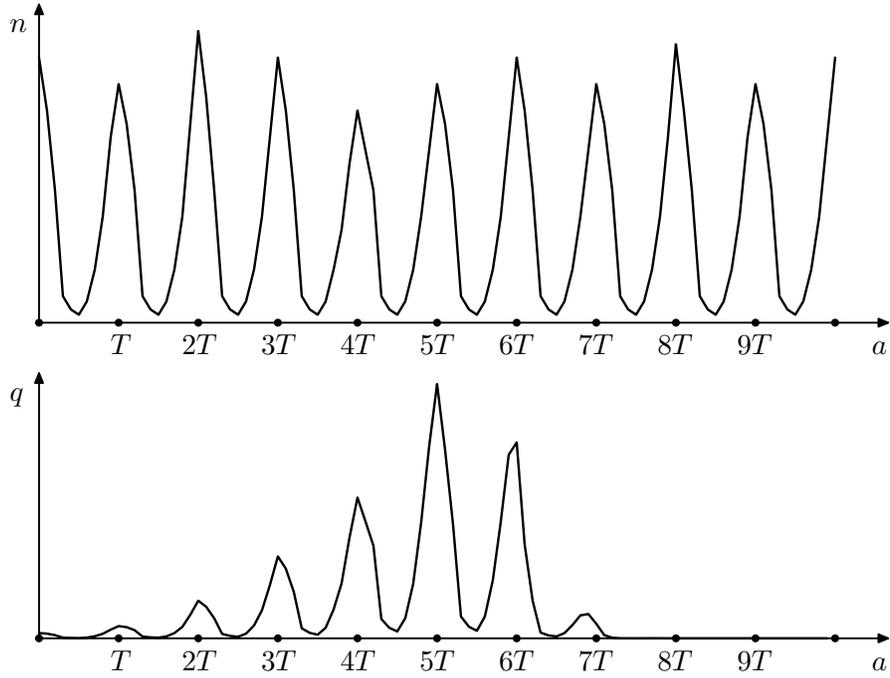

\centerline{
\begin{picture}(330,130)(0,0)
\put(0,0){\includegraphics{imm-gen-2.mps}}
\put(317,-10){$a$}
\put(-9,112){$n$}
\put(29,-10){$T$}
\put(56,-10){$2T$}
\put(86,-10){$3T$}
\put(116,-10){$4T$}
\put(146,-10){$5T$}
\put(176,-10){$6T$}
\put(206,-10){$7T$}
\put(236,-10){$8T$}
\put(266,-10){$9T$}
\end{picture}
}
\vskip1cm
\centerline{
\begin{picture}(330,100)(0,-10)
\put(0,0){\includegraphics{imm-gen-3.mps}}
\put(317,-10){$a$}
\put(-9,92){$q$}
\put(29,-10){$T$}
\put(56,-10){$2T$}
\put(86,-10){$3T$}
\put(116,-10){$4T$}
\put(146,-10){$5T$}
\put(176,-10){$6T$}
\put(206,-10){$7T$}
\put(236,-10){$8T$}
\put(266,-10){$9T$}
\end{picture}
}
\caption{Graphs of the functions $n$ and $q(x_b,\cdot)$ for $ck\approx 0.953$ and $x_b^k\bar n\approx 125\kappa$; $\bar n$ is the mean of the function $n$.}
\label{r:im-2}
\end{figure}
\end{example}

 %(see Fig.~\ref{f:multimodal})

It is worth noting that not only does the course of the epidemic affect the immune status of individuals, 
but conversely the immune status of individuals also affects the course of the epidemic. 
The  stationary density 
of the process $(\xi_t)_{t\ge 0}$  
describes the distribution of immune status in a group of people
who have undergone infections
and this distribution can be determined by serum antibody testing \cite{DEMELKER2006106,Kretzschmar2010}. 
People in this group tend to pass subsequent infections
asymptomatically but they can be potential carriers \cite{GKTD,Metcalf}. 
Thus, the heterogeneity of the population by degree of immunity should be incorporated
in epidemic modeling and vaccination planning. 
Knowing the distribution of immune status can be a good guide to determine the time after which revaccination 
should be applied to maintain population immunity
\cite{Antia,Lavine-short,Le,Wendelboe,WilsonE70}. 
Vaccination planning should also take into account the emergence of new virus variants and the waning of cross-protective immunity \cite{Reich}. 

Some viral outbreaks, such as influenza, pertussis and possibly covid, 
usually occur on an annual cycle, but subsequent outbreaks have been 
observed in longer cycles.   
This phenomenon is associated with demographic fluctuation -- a decline in the immune status
of those who have undergone infection and the emergence of new individuals. 
This is especially true for diseases occurring in infancy, such as pertussis~\cite{Wearing}. 
Including the distribution of immune status in an epidemiological model should make it easier to predict future outbreaks.

%\subsection{Acknowledgements} 

%\subsection{Bibliography}

%\begin{enumerate}[1]
%\item Use \verb"\bibliography{wileyNJD-AMA}" BST file for AMA reference style
%\item Use \verb"\bibliography{wileyNJD-APA}" BST file for APA reference style
%\item Use \verb"\bibliography{wileyNJD-AMS}" BST file for AMS reference style
%\item Use \verb"\bibliography{wileyNJD-VANCOUVER}" BST file for Vancouver reference style
%\item Use \verb"\bibliography{wileyNJD-ACS}" BST file for Chemistry reference style
%\end{enumerate}

%The normal commands for producing the reference list are:

%\begin{verbatim}
\begin{thebibliography}{99}
%\bibitem{banasiakarlotti06} %(MR2178970) [10.1007/1-84628-153-9]
%\newblock Banasiak J, Arlotti L. 
%\newblock \emph{Perturbations of Positive Semigroups with Applications}. 
%\newblock London: Springer Monographs in Mathematics. Springer-Verlag; 2006.


%\bibitem{BPR} %(MR2915575) [10.1007/s10440-011-9666-y]
%\newblock Banasiak J, Pich\'or K, Rudnicki R.
%\newblock {Asynchronous exponential growth of a general structured population model}.
%\newblock \emph{Acta Appl. Math.} 2012;119:149--166.

\bibitem{Antia} %10.1371/journal.pbio.2006601
\newblock{Antia A, Ahmed H, Handel A, Carlson NE, Amanna IJ, Antia R, Slifka M.}
\newblock{Heterogeneity and longevity of antibody memory to viruses and vaccines.}
\newblock \emph{PLoS Biol.}  2018; 16:e2006601.

\bibitem{Blackwood} % 10.1073/pnas.1220908110
\newblock{Blackwood JC, Cummings DAT, Broutin H, Iamsirithaworn S, Rohani P.} 
\newblock{Deciphering the impacts of vaccination and immunity on pertussis epidemiology in Thailand.}
\newblock \emph{Proc Natl Acad Sci USA} 2013; 110:9595--9600. 

\bibitem{Broutin} %10.1093/aje/kwi141
\newblock{Broutin H, Guegan JF, Elguero E, Simondon F, Cazelles B.}
\newblock{Large-scale comparative analysis of pertussis population dynamics: periodicity, synchrony, and impact of vaccination.}
\newblock \emph{Am. J. Epidemiol.} 2005; 161:1159--1167.


\bibitem{davis84} %(MR0790622)  10.1111/j.2517-6161.1984.tb01308.x
\newblock Davis MHA.
\newblock {Piecewise-deterministic Markov processes: A general class of non-diffusion stochastic models}.
\newblock \emph{J. Roy. Statist. Soc. Ser. B} 1984; 46:353--388.

\bibitem{GKTD} %[10.1016/j.epidem.2014.08.002]
\newblock de Graaf WF, Kretzschmar MEE, Teunis PFM, Diekmann O. 
\newblock {A two-phase within-host model for immune response and its application to serological profiles of pertussis}.
\newblock \emph{Epidemics} 2014;9:1--7. 

\bibitem{DEMELKER2006106}
\newblock de Melker HE, Versteegh FGA, Schellekens JFP, Teunis PFM, Kretzschmar M.
\newblock {The incidence of Bordetella pertussis infections estimated in the population from a combination of serological surveys}.
\newblock \emph{Journal of Infection}, 2006;53: 106--113.

\bibitem{DGKT} %(MR3881872) [10.1007/s00285-018-1239-5]
\newblock  Diekmann O, de Graaf WF,  Kretzschmar MEE, Teunis PFM. 
\newblock {Waning and boosting: on the dynamics of immune status}.
\newblock  \emph{J. Math. Biol.} 2018; 77:2023--2048.

\bibitem{Dushoff} %10.1073/pnas.0407293101
\newblock{Dushoff J, Plotkin JB, Levin SA, Earn DJD.}
\newblock Dynamical resonance can account for seasonality of influenza epidemics. 
\newblock \emph{Proc Natl Acad Sci USA} 2004; 101:16915--16916.

%\bibitem{EngelNagel}
%\newblock Engel K-J, Nagel R. 
%\newblock \emph{One-Parametr Semigroups for Linear Evolution Equations.}
%\newblock New York: Springer, Springer Graduate Texts in Math. vol 194; 2000.
%DOI: 10.1007/b97696

\bibitem{Evans}
\newblock Evans L. 
\newblock \textit{Partial differential equations}.  
\newblock Providence, R.I.: American Mathematical Society; 1998.
%DOI: 10.1090/gsm/019.


\bibitem{Fo}
\newblock Foguel SR.
\newblock \emph{The Ergodic Theory of Markov Processes},
\newblock Van Nostrand Reinhold Comp., New York 1969.

\bibitem{Gerlach-Gluck1} %10.1090/tran/7836
\newblock Gerlach M,  Gl\"uck J. 
\newblock Convergence of positive operator semigroups.
\newblock\emph{Trans Amer Math Soc.} 2019; 372:6603--6627.


\bibitem{Martin-Gluck2} %10.1007/s11856-021-2240-z
\newblock Gl\"uck J, Martin FG. 
\newblock Uniform convergence of stochastic semigroups. 
\newblock \emph{Isr. J. Math.} 2021;  
%DOI: 

\bibitem{greiner} 
\newblock Greiner G.
\newblock  Perturbing the boundary conditions of a generator. 
\newblock \emph{Houston J. Math.} 1987; 13:213--229. 

%\bibitem{GMTK-JMAA}
%\newblock  Gwi\.zd\.z, P, Tyran-Kami\'nska M.
%\newblock  Densities for piecewise deterministic Markov processes with boundary. 
%\newblock \emph{J. Math.\ Anal. Appl.}, \textbf{479} (2019), 384--425.

\bibitem{GMTK}  % 10.1007/s11117-019-00644-w
\newblock  Gwi\.zd\.z P, Tyran-Kami\'nska M.
\newblock Positive semigroups and perturbations of boundary conditions.
\newblock \emph{Positivity} 2019; 23:921--939.



\bibitem{Gulbudak} % 10.1007/s00285-020-01480-3
\newblock{Gulbudak H, Browne CJ.}
\newblock{Infection severity across scales in multi-strain immuno-epidemiological Dengue model structured by host antibody level.}
\newblock \emph{J. Math. Biol.} 2020; 80:1803--1843.


%\bibitem{GMTK} %(MR3987040) [10.1016/j.jmaa.2019.06.032]
%\newblock Gwi\.zd\.z P, Tyran-Kami\'nska M. 
%\newblock {Densities for piecewise deterministic Markov processes with boundary}.
%\newblock \emph{J. Math.\ Anal. Appl.} 2019;479:384--425.

\bibitem{Keeling-book}
\newblock{Keeling MJ, Rohani P.}
\newblock \emph{ Modeling infectious diseases in humans and animals.}
\newblock Princeton University Press, Princeton; 2008.




\bibitem{Kretzschmar2010}
\newblock{Kretzschmar M, Teunis PF, Pebody RG.}
\newblock {Incidence and reproduction numbers of pertussis: estimates from serological and social contact data in five European countries.}
\newblock \emph{PLoS Med.} 2010; 7: e1000291.

\bibitem{LiM} %(MR1244104) [10.1007/978-1-4612-4286-4]
\newblock Lasota A, Mackey MC.
\newblock \emph{Chaos, Fractals and Noise. Stochastic Aspects of Dynamics}.
 \newblock New York: Springer Applied Mathematical Sciences vol. 97 Springer; 1994.

\bibitem{Lavine-short} %  10.1016/j.vaccine.2011.11.065
\newblock{Lavine JS, Bj{\o}rnstad ON, Freiesleben de Blasio BF, Storsaeter J.}
\newblock{Short-lived immunity against pertussis, age-specific routes of transmission, and the utility of a teenage booster vaccine}. 
\newblock \emph{Vaccine} 2012;  30:544--551. 


\bibitem{Lavine-Broutin}  % 10.1016/j.vaccine.2010.10.029
\newblock{Lavine JS, Broutin H, Harvill ET, Bj{\o}rnstad ON.}
\newblock{Imperfect vaccine-induced immunity and whooping cough transmission to infants.}
\newblock \emph{Vaccine} 2010; 29:11--16.  

\bibitem{Le}  %10.1007/s00285-021-01681-4
\newblock{Le A, King AA, Magpantay FMG, Mesbahi A, Rohani P.} 
\newblock{The impact of infection-derived immunity on disease dynamics.} 
\newblock \emph{J. Math. Biol.} 2021; 83:61. % 23 pages


%\bibitem{M-R} %(MR1306151) [10.1007/BF00160175]
%\newblock Mackey MC, Rudnicki R.
%\newblock {Global stability in a delayed partial differential equation describing cellular replication}.
%\newblock \emph{J.\ Math.\ Biol.} 1994;33:89--109.

%\bibitem {Mac-Tyr} %(MR2438853) [10.4064/ap94-2-2]
%\newblock Mackey MC,  Tyran-Kami\'nska M.
%\newblock {Dynamics and density evolution in piecewise deterministic growth processes}.
%\newblock \emph{Ann. Polon. Math.} 2008;94:111--129.

\bibitem{Metcalf} %10.1016/S0140-6736(16)30164-7
\newblock{Metcalf CJE, Farrar J, Cutts FT, Basta NE, Graham AL, Lessler J, Ferguson NM, Burke DS, Grenfell BT.} 
\newblock{Use of serological surveys to generate key insights into the changing global landscape of infectious disease.} 
\newblock \emph{Lancet} 2016; 388:728--730.

\bibitem{PR-jmaa2} %(MR1781248) [10.1006/jmaa.2000.6968]
\newblock Pich\'or K, Rudnicki R.
\newblock {Continuous Markov semigroups and stability of transport equations}.
\newblock \emph{J. Math.\ Anal.\ Appl.} 2000; 249:668--685.

\bibitem{PR-JMMA2016} %(MR3440095) [10.1016/j.jmaa.2015.12.009]
\newblock Pich\'or K, Rudnicki R.
\newblock {Asymptotic decomposition of substochastic operators and semigroups}.
\newblock \emph{J. Math. Anal. Appl.} 2016; 436:305--321.

\bibitem{PR-SD2017} %(MR3720516) [10.1142/S0219493718500016]
\newblock Pich\'or K, Rudnicki R.
\newblock {Asymptotic decomposition of substochastic semigroups and applications}.
\newblock \emph{Stoch. Dyn.} 2018;18:1850001. %18 pp.

\bibitem{PR-immunology} %[10.1002/mma.6536]
\newblock Pich\'or K, Rudnicki R.
\newblock {Dynamics of antibody levels: Asymptotic properties}
\newblock \emph{Math. Meth. Appl. Sci.} 2020; 43:10490--10499.

%\bibitem{PR-prz-apss-apdMp}
%\newblock Pich\'or K, Rudnicki R.
%\newblock Asymptotic properties of stochastic semigroups with applications to piecewise deterministic Markov processes
%\newblock In  Banasiak J,  Bobrowski A, Lachowicz M, Tomilov Y, eds.
%\emph{Semigroups of operators: theory and applications SOTA-2018}, Springer Proceedings in Mathematics \& Statistics, vol. {325},  Cham: Springer 2020 (pp. 329--347).

\bibitem{Reich} %10.1098/rsif.2013.0414
\newblock{Reich NG, Shrestha S, King AA, Rohani P, Lessler J, Kalayanarooj S, Yoon I-K, Gibbons RV, Burke DS, Cummings DAT.}
Interactions between serotypes of dengue highlight epidemiological impact of cross-immunity.
\emph{J. R. Soc. Interface} 2013; 10:20130414.


\bibitem{R-b95} 
\newblock  Rudnicki R. 
\newblock On asymptotic stability and sweeping for Markov operators, 
\newblock \emph{Bull. Pol. Ac.: Math.} 1995; 43:245--262.

\bibitem{Rudnicki-LN} %(MR3329325) [10.1007/978-3-319-11322-7_6]
\newblock  Rudnicki R.
\newblock {Stochastic operators and semigroups and their applications in physics and biology}.
\newblock In: Banasiak J, Mokhtar-Kharroubi M, eds. \emph{Evolutionary Equations with Applications in Natural Sciences}, Lecture Notes in Mathematics, 
vol. 2126: Heidelberg: Springer 2015 (pp. 255--318).

\bibitem{RT-K-k} %(MR3675372) [10.1007/978-3-319-61295-9]
\newblock Rudnicki R, Tyran-Kami\'nska M.
\newblock \emph{Piecewise Deterministic Processes in Biological Models}.
\newblock Cham, Switzerland: SpringerBriefs in Applied Sciences and Technology, Mathematical Methods, Springer;  2017.

\bibitem{TEGB-MK}  %[10.1016/j.epidem.2016.04.001]
\newblock Teunis PFM, van Eijkeren JCH, de Graaf WF,  Bona\v ci\'c Marinovi\'c A,  Kretzschmar MEE.
\newblock {Linking the seroresponse to infection to within-host heterogeneity in antibody production}.
\newblock \emph{Epidemics} 2016; 16:33--39.

\bibitem{Versteegh} % 10.1017/s0950268805003833
\newblock{Versteegh FGA, Mertens PLJM, de Melker HE, Roord JJ, Schellekens JFP, Teunis PFM.} 
\newblock{Age-specific long-term course of IgG antibodies to pertussis toxin after symptomatic infection with \textit{Bordetella pertussis}.} 
\newblock \emph{Epidemiol. Infect.} 2005; 133:737--748.

\bibitem{Wearing} %10.1371/journal.ppat.1000647
\newblock{Wearing HJ, Rohani P.}
\newblock{Estimating the duration of pertussis immunity using epidemiological signatures.}
\newblock \emph{PLoS Pathog.} 2009; 5:e1000647. %10pages

\bibitem{Wendelboe}  % 10.1097/01.inf.0000160914.59160.41
\newblock{Wendelboe AM, Van Rie A,  Salmaso S, Englund JA.}
\newblock{Duration of immunity against pertussis after natural infection or vaccination.}
\newblock \emph{Review Pediatr. Infect. Dis. J.} 2005; 24:58--61.


\bibitem{WilsonE70}
\newblock{Wilson SE, Deeks SL, Hatchette TF, Crowcroft NS.}
\newblock{The role of seroepidemiology in the comprehensive surveillance of vaccine-preventable diseases.}
\newblock \emph{CMAJ} 2012; 184: E70--E76.
 

\end{thebibliography}
%\end{verbatim}

\end{document}